\newcommand{\Formel}[1]{(\ref{#1})}
\newcommand{\U}{\mathfrak U}
\renewcommand{\L}{\mathbb L}
\newcommand{\BN}{\mathbb B}
\newcommand{\Lem}[1]{Lemma~\ref{#1}}
\newcommand{\Prop}[1]{Proposition~\ref{#1}}
\newcommand{\Cor}[1]{Corollary~\ref{#1}}
\newcommand{\Theo}[1]{Theorem~\ref{#1}}
\newcommand{\F}{{\mathcal F}}
\newcommand{\G}{{\mathcal G}}
\newcommand{\id}{{\rm id}}
\renewcommand{\O}{{\mathcal O}}
\newcommand{\PN}{{\mathbb P}}
\newcommand{\rk}{{\rm rk}}
\newcommand{\Pic}{{\rm Pic}}
\newcommand{\lra}{\longrightarrow}
\newcommand{\KC}{{\mathbb C}}
\newcommand{\KZ}{{\mathbb Z}}
\newcommand{\Hom}{{\rm Hom}}
\newcommand{\HAut}{{\rm HAut}}
\newcommand{\Aff}{{\rm Aff}}
\newcommand{\COAff}{CO{\rm Aff}}
\newcommand{\K}{\mathcal K}
\newcommand{\D}{\mathcal D}
\newcommand{\KR}{\mathbb R}
\newcommand{\OG}{{\mathbb O}{\mathbb G}}
\renewcommand{\G}{{\mathbb G}}
\newcommand{\Q}{{\mathcal Q}}
\newtheorem{lemma1}[equation]{}
\newenvironment{lemma}{\begin{lemma1}{\bf Lemma.}}{\end{lemma1}}
\newenvironment{example}{\begin{lemma1}{\bf Example.}\rm}{\end{lemma1}}
\newenvironment{theorem}{\begin{lemma1}{\bf Theorem.}}{\end{lemma1}}
\newenvironment{proposition}{\begin{lemma1}{\bf Proposition.}}{\end{lemma1}}
\newenvironment{corollary}{\begin{lemma1}{\bf Corollary.}}{\end{lemma1}}
\renewcommand{\L}{\mathcal L}
\begin{document}

\begin{abstract}
We classify projective manifolds with flat holomorphic conformal structures.
\end{abstract}
\subjclass{53B15; 14K10; 32M15}
\title{Projective manifolds modeled after hyperquadrics}
\author[P. Jahnke]{Priska Jahnke}
\address{Priska Jahnke - Fakult\"at T1- Hochschule Heilbronn
  - Max-Planck-Stra{\ss}e 39 - D-74081 Heilbronn, Germany}
\email{priska.jahnke@hs-heilbronn.de}
\author[I.Radloff]{Ivo Radloff}
\address{Ivo Radloff - Mathematisches Institut - Universit\"at T\"ubingen -
  D-95440 T\"ubingen, Germany}
\email{ivo.radloff@uni-tuebingen.de}
\date{\today}
\maketitle

\section*{Introduction}

Manifolds with certain reductions of the structure group are classical objects in real differential geometry. In the holomorphic setting, compact complex manifolds modeled after hermitian symmetric spaces have been studied by Kobayashi and Ochiai (\cite{KO}, \cite{KObook}, \cite{KO}) and others, more recently by Mok and Hwang around rigidity questions (\cite{HM}, \cite{HM2}). In this article we give the complete list of projective manifolds with quadric structure.

Let $M$ be a compact, connected complex manifold and $S$ a compact hermitian symmetric space of the same complex dimension $m$. There is a notion of a {\em flat} $S$--structure and of an {\em infinitesimal} $S$--structure on $M$. If the universal covering space $\tilde{M}$ of $M$ admits an embedding $\tilde{M}\hookrightarrow S$, such that $\pi_1(M)$ becomes a subgroup of ${\rm HAut}(S)$, then $M$ carries a flat $S$--structure. Typical examples are $M=S$, certain quotients of $\KC^m$ and compact quotients of the non compact dual of $S$ in the sense of hermitian symmetric spaces. 
The infinitesimal version of an $S$--structure asks for a certain subbundle in the holomorphic frame bundle of $M$. Equivalent in the case of $S=\Q_m$, the $m$--dimensional hyperquadric, is the existence of a {\em holomorphic conformal structure}, i.e., a non--degenerate section
  \[g \in H^0(M, S^2T_M \otimes L), \quad L\in \Pic(M).\]
If $M$ carries a quadric structure, then $M$ carries a holomorphic conformal structure, called flat. Whether every holomorphic conformal structure is flat, i.e., comes from a quadric strucutre, is not known.

Any compact Riemann surface carries a flat holomorphic conformal structure by the uniformization theorem. Kobayashi and Ochiai gave a complete list in the compact surfaces case (\cite{KOQuad}), proving flatness in dimension $2$. The authors proved flatness of any holomorphic conformal structure in the case of projective threefolds (\cite{JRquadr}). In the K\"ahler--Einstein case one has the following result (\cite{KOQuad}):

\begin{theorem}
  A compact K\"ahler--Einstein manifold $M_m$ carries a holomorphic conformal structure if and only if $M$ is either
  \begin{enumerate}
    \item $\simeq \Q_m$,
    \item a torus or a conformally hyperelliptic manifold or
    \item a $\D_m$ quotient.
  \end{enumerate}
\end{theorem}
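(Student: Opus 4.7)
The plan is to use the non-degenerate section $g \in H^0(M, S^2 T_M \otimes L)$ to tie $L$ to $K_M$ numerically, and then split according to the sign of the Einstein constant.

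First, non-degeneracy of $g$ gives an isomorphism $T_M \otimes L^{-1} \cong T_M^\ast$; taking determinants yields the identity $2K_M + mL = 0$ in $\Pic(M)$. In particular $L$ and $-K_M$ are numerically proportional, so $L$ is ample, numerically trivial, or antiample exactly when $-K_M$ is. Next, a Bochner-type computation combining the holomorphic symmetric tensor $g$ with the K\"ahler--Einstein equation $\operatorname{Ric}(\omega)=\lambda\omega$ shows that $g$ is parallel for the Chern connection. This reduces the holonomy of $(M,\omega)$ to the intersection $\mathrm{U}(m) \cap \mathrm{O}(m,\KC)$, so $(M,\omega,g)$ carries a locally homogeneous structure.

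Normalize $\lambda \in \{+1,0,-1\}$. In the Fano case $\lambda>0$, a parallel conformal structure is automatically flat; the developing map $\tilde M \to \Q_m$ combined with compactness of $M$, simple connectedness of $\Q_m$ and ampleness of $-K_M$ forces $M \cong \Q_m$. In the Ricci-flat case $\lambda=0$, the intersection $\mathrm{U}(m)\cap\mathrm{O}(m,\KC)$ equals the real orthogonal group $\mathrm{O}(m)$, so the holonomy of $\omega$ is real; by Bieberbach (or, equivalently, by Beauville--Bogomolov combined with the observation that irreducible Calabi--Yau and hyperk\"ahler factors carry no non-degenerate holomorphic conformal structure), a finite \'etale cover of $M$ is a torus, hence $M$ is a torus or, by definition, conformally hyperelliptic. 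In the negative case $\lambda<0$, $K_M$ is ample and $L^{-1}$ is ample; the holonomy reduction forces $M$ to be locally hermitian symmetric of irreducible type IV, so $\tilde M \cong \D_m$ and $M$ is a $\D_m$ quotient.

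The main obstacle is the Bochner calculation establishing $\nabla g = 0$: one has to compute the Laplacian of $|g|^2$ on the compact manifold $M$, use the K\"ahler--Einstein identity to convert the curvature term into one of definite sign, and integrate by parts. Once parallelism is established, the Fano case reduces to a standard developing-map argument, the Ricci-flat case to Bieberbach, and the negative case to the routine holonomy classification of irreducible hermitian symmetric spaces.
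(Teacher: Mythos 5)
First, a point of reference: the paper does not prove this statement at all --- it is quoted from Kobayashi--Ochiai \cite{KOQuad} --- so your proposal has to be measured against their argument rather than against anything in the text. Your strategy is in fact essentially theirs. The Bochner step is sound: the numerical relation between $L$ and $K_M$ coming from $\det$ of the isomorphism $g$ is exactly what makes the bundle carrying $g$ have vanishing first Chern class, hence Hermite--Einstein of factor $0$ for the metric induced by the K\"ahler--Einstein metric, so every holomorphic section is parallel. One small correction: since $g$ takes values in a twist by $L$, the holonomy reduction is to $U(m)\cap CO(m,\KC)$, i.e.\ to $SO(2)\cdot O(m)$ up to components, not to $U(m)\cap O(m,\KC)=O(m,\KR)$; in the Ricci-flat case the extra circle is removed only because ${\rm Hol}^0\subset SU(m)$.

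The genuine gap is the sentence ``a parallel conformal structure is automatically flat'' (and the earlier ``so $(M,\omega,g)$ carries a locally homogeneous structure''). Neither is a formal consequence of parallelism; a parallel tensor constrains the holonomy, nothing more. The missing engine is Berger's holonomy theorem: for $m\ge 3$ the group $SO(2)\cdot SO(m)$ acts irreducibly on $\KR^{2m}$ and is not on the list of non-symmetric irreducible holonomies, so $M$ is locally symmetric, and the only symmetric spaces with this isotropy are $\Q_m=\OG(2,m+2)$, its dual $\D_m$, and flat space; the sign of $\lambda$ then selects the case (with $\lambda>0$ giving simple connectedness and hence $M\simeq\Q_m$ rather than a quotient). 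In the Ricci-flat case the holonomy sits in the diagonal $SO(m)\subset U(m)$, a \emph{reducible} real representation, and flatness follows from the de~Rham decomposition (or from Beauville--Bogomolov as you indicate) before Bieberbach applies; one must then still check that the deck group lies in $\COAff_m(\KC)$, which is precisely what separates ``conformally hyperelliptic'' from merely hyperelliptic (this is the paper's Section~3). Finally, $m=2$ needs a separate word, since $SO(2)\cdot SO(2)$ is reducible and $\Q_2\simeq\PN_1\times\PN_1$, and the easy converse direction of the ``if and only if'' should at least be recorded. With Berger's theorem inserted at the crux, your outline becomes the standard proof.
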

Conformally hyperelliptic manifolds are particular finite {\'e}tale quotients of tori. The notion will be introduced in \ref{ConfHyp}. A $\D_m$ quotient means the universal covering space of $M$ can be identified with $\D_m$, the non compact dual of $\Q_m$ in the sense of hermitian symmetric spaces also reviewed in \ref{Dual} below. We prove:

\begin{theorem}
  Let $M_m$ be a projective manifold with a quadric structure which is \underline{not} K\"ahler--Einstein. Then $M$ is a complex surface and a fiber bundle over a compact Riemann surface $C$. More precisely, there exists a representation
   \[\rho: \pi_1(C) \lra Aut(F), \quad F \mbox{ rational or elliptic,}\]
such that $M = F \times_{\rho} C$.
\end{theorem}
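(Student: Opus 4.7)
The plan is to first reduce to complex dimension $m=2$ by ruling out non K\"ahler--Einstein projective examples in higher dimensions, and then to analyze non K\"ahler--Einstein quadric surfaces via the two null foliations coming from the decomposition $\Q_2\simeq\PN^1\times\PN^1$.

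Setting up the developing map, the flatness hypothesis yields an \'etale holomorphic immersion $\delta:\tilde M\to\Q_m$ equivariant under a holonomy homomorphism $\rho:\pi_1(M)\lra \HAut(\Q_m)$. Identifying $\tilde M$ with its image in $\Q_m$, $M$ is reconstructed as $\tilde M/\Gamma$ with $\Gamma=\rho(\pi_1(M))$, and the non-degenerate conformal structure $g$ forces the numerical relation $2K_M+mL=0$. For $m\geq 3$, I would show that projectivity forces $\tilde M$ to be either all of $\Q_m$, the complement of a hyperplane section of $\Q_m$, or the bounded symmetric domain $\D_m$; each case produces one of the K\"ahler--Einstein manifolds listed in the theorem quoted above, so $M$ must be K\"ahler--Einstein whenever $m\geq 3$. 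This is the main obstacle of the proof: one must exclude intermediate open subsets of $\Q_m$ as universal covers of projective $M$, and the argument relies on the authors' techniques from \cite{JRquadr} combined with rigidity of holomorphic parabolic geometries in dimension $\geq 3$.

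Once $m=2$ is established, the two projections in $\Q_2\simeq\PN^1\times\PN^1$ pull back through $\delta$ to two holomorphic foliations on $\tilde M$, which descend (after passing to an index two cover that absorbs the swap component of $\HAut(\Q_2)$) to holomorphic foliations $\F_1,\F_2$ on $M$, namely the two null foliations of $g$. Using the numerical identity $-K_M=L$ together with the Enriques--Kodaira classification of surfaces, I would argue that if $M$ is not K\"ahler--Einstein then at least one of these null foliations, say $\F_1$, has compact leaves, and its Stein factorization produces a holomorphic fibration $f:M\lra C$ onto a compact Riemann surface $C$.

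Finally, the universal cover of a general fibre $F$ of $f$ embeds via $\delta$ into $\PN^1$, so $F$ is isomorphic either to $\PN^1$ or to an elliptic curve. The complementary null foliation $\F_2$ is everywhere transverse to $\F_1$ and integrable, hence provides a flat holomorphic Ehresmann connection on $f$; transporting a marked fibre along loops in $C$ yields the monodromy representation $\rho:\pi_1(C)\lra Aut(F)$ that identifies $M$ with the flat fibre bundle $F\times_\rho\tilde C$, as required.
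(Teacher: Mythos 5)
There is a genuine gap, and it sits exactly where the theorem is hardest. Your reduction to $m=2$ rests on two unjustified steps. First, a development $\psi:\tilde M\to\Q_m$ is only an immersion; it is not known to be injective, a covering onto its image, or even to have open image with $\pi_1(M)$ acting there properly discontinuously. So ``identifying $\tilde M$ with its image'' and writing $M=\tilde M/\Gamma$ assumes the much stronger hypothesis of Lemma~\ref{UnivCov} rather than the mere existence of a quadric structure. Second, the asserted trichotomy for $m\ge 3$ (image equal to $\Q_m$, to the complement of a hyperplane section, or to $\D_m$) \emph{is} the theorem in higher dimensions; there is no citable rigidity statement classifying the open subsets of $\Q_m$ that can serve as universal covers of projective manifolds, and \cite{JRquadr} only treats threefolds. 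The paper avoids this entirely by running algebraic geometry on $M$ itself: if $M$ contains a rational curve, the development makes $T_M$ nef on it, $M$ is uniruled, and Hwang--Mok forces $M\simeq\Q_m$ for $m\neq 2$ (Proposition~\ref{ratcurve}); if not, $K_M$ is nef, $M$ has large fundamental group (Proposition~\ref{lfg}), Koll{\'a}r's theory yields an abelian group scheme $M'\to N'$ with $K_{N'}$ ample (Theorem~\ref{ClassAbSc}), and then the Atiyah-class identity \Formel{AtClHCS} forces $N'$ to be a ball quotient, the Viehweg--Zuo Arakelov inequality forces $m=2n$, and the symmetric-versus-alternating clash for the Kodaira--Spencer class forces $n=1$. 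None of this is replaced by anything in your outline.

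The surface half of your argument is closer in spirit to Kobayashi--Ochiai, but it too has an unproved key step: you assert that if $M$ is not K\"ahler--Einstein then one of the two null foliations has compact leaves. That is precisely what must be shown, and it is not automatic --- in the actual examples $F\times_\rho C$ the \emph{transverse} null foliation generically has non-compact leaves (infinite \'etale covers of $C$ when $\rho$ has infinite image), so compactness of leaves of the other one needs an argument (the paper gets the fibration from minimality of the ruled surface, respectively from the group-scheme structure, and then splits the sequence \Formel{seq0} via Lemma~\ref{CSeq}). Finally, after passing to the index-two cover that kills the swap component of $\HAut(\PN_1\times\PN_1)$, your conclusion lives on the cover; you still must descend the bundle structure $F\times_\rho \tilde C$ back to $M$ itself. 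As written, the proposal is a plausible programme for $m=2$ but does not constitute a proof of the theorem.
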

Here $M = F \times_{\rho} C$ means the following. Let $\tilde{C}$ denote the universal covering space of $C$. Then we obtain $M$ by dividing $F \times \tilde{C}$ by the action of $\pi_1(C)$
  \[\gamma(f, \tau)  = (\rho(\gamma)(f), \gamma(\tau)), \quad \gamma \in \pi_1(C).\]
In other words, the only non--K\"ahler--Einstein examples arise in dimension $2$, caused by the fact that $\Q_2 \simeq \PN_1 \times \PN_1$ is not irreducible as a hermitian symmetric space.
  
The above theorem reproves Kobayashi and Ochiai's result in the case of projective surfaces as well as the author's result in the case of projective threefolds and quadric structures. Some of the methods from the proof also apply to the non--flat case, however a general classification for manifolds with holomorphic conformal structures seems out of reach at the moment.

\

\noindent {\bf Notation.} $M$ denotes a compact, connected complex manifold of complex dimension $m$. We always assume $M$ to be K\"ahler.

The holomorphic tangent bundle of $M$ is $T_M$, its dual $\Omega_M^1$, the bundle of holomorphic $1$--forms. The canonical bundle is $K_M = \det \Omega_M^1$. We do not distinguish between Cartier divisors and line bundles. For a line bundle write $rL$ instead of $L^{\otimes r}$. In the case $M$ projective we write $\equiv$ for numerical equivalence of line bundles, i.e. equality of Chern classes in $H^1(M, \Omega_M^1)$. A fibration $M \lra N$ is holomorphic and has connected fibers.

\section{The complex projective hyperquadric and quadric structures}
\setcounter{equation}{0}

There is a general notion of an $S$--structure, $S$ an arbitrary hermitian symmetric space of the compact type. We only need the definition for the spacial case $S = \Q_m$, the $m$--dimensional complex projective quadric, and later for $S = \PN_m(\KC)$. The general constructions are also explained in \cite{KOQuad}, \cite{KNBook}, we include it for the convenience of the reader. 

\subsection{The hyperquadric $\Q_m \subset \PN_{m+1}(\KC)$} Let $q=(q_{ij}) \in M_{m+2,m+2}(\KC)$ be symmetric and non--degenerate. The {\em complex projective quadric} 
  \begin{equation}\label{Qq}
 \Q(q)=\{[Z] = [Z_0: \dots :Z_{m+1}] \in \PN_{m+1}(\KC) \mid \sum q_{ij}Z_iZ_j=0\}
 \end{equation}
is a smooth projective homogeneous variety of complex dimension $m$. Write 
$\Q_m = \Q(I_{m+2})$, where $I_{m+2}$ is the unit matrix, for short. It is well known that $\Q_m\simeq \Q(q)$ for any $q$ by a projective linear change of coordinates. We will use these isomorphisms without further mentioning below. Recall
  \[\Q_1 \simeq \PN_1(\KC) \quad \mbox{and} \quad \Q_2 \simeq \PN_1(\KC)\times \PN_1(\KC).\]
For $m>2$, $\Q_m$ ist Fano with Picard number one. The quadric $\Q_m$ is a hermitian symmetric space of type $BD \, I$. For the later definition of the non--compact dual we have to recall some of the background: the natural $S^1$--bundle on $\Q_m$ 
 \begin{equation} \label{KomplStr}
     \{M \in M_{m+2,2}(\KR) \mid M^tM = I_2\} \lra \Q_m, \quad M \mapsto M\mbox{$\left({i\atop 1}\right)$},
  \end{equation}
identifies $M, M' \in M_{m+2,2}(\KR)$ iff $M = M'D$ for some $D \in SO(2) \simeq S^1$. The columns of such an $M$ define a ON--basis for the standard metric of a twoplane in $\KR^{m+2}$. Geometrically, $M, M' \in M_{m+2,2}(\KR)$ are identified iff they span the same twoplane and with the same orientation. In other words
  \[\Q_m = \OG(2, m+2),\]
the Grassmannian of oriented twoplanes in $\KR^{m+2}$. Recall that with the usual Grassmanian $\G(2, m+2)$ one allows $D \in O(2)$ which gives an {\'e}tale $2:1$ covering $\OG(2, m+2) \lra \G(2, m+2)$. Identifying $\Q_m \simeq \OG(2, m+2)$, one has to divide by the fixed point free action of complex conjugation.

The $S_1$--bundle in \Formel{KomplStr} is compatible with the natural transitive action of the orthogonal group $SO(m+2)$ on both sides by multiplication from the left. Stabilizer of the reference point $(0_{2,m}, I_2)^t$ or $[0:\cdots :0:i:1] \in \Q_m$, respectively, is $SO(m) \times SO(2)$. Hence
   \[\Q_m = \OG(2, m+2) \simeq SO(m+2)/SO(m) \times SO(2).\]
We obtain the description of $\Q_m$ as quotient of the connected compact simple real Lie group $SO(m+2)$ by the maximal connected proper subgroup $SO(m) \times SO(2)$ with non discrete center
\[Z(SO(m) \times SO(2))=\left\{\begin{array}{cc}
SO(m) \times SO(2) & m=1,2 \\
\{I_m\} \times SO(2) & m>2 \\
\end{array}\right.\]
The real tangent bundle on $SO(m+2)/SO(m) \times SO(2)$ is induced by the adjoint representation of $SO(m) \times SO(2)$ on $so(m+2)/so(m) \times so(2)$. The complex structure on $\Q_m$ is defined by $I_m \times D_{\frac{\pi}{2}} \in Z(SO(m) \times SO(2))$, where $D_\alpha \in SO(2)$ denotes the standard positive rotation matrix. The square of the complex structure defines the reflecting automorphism at the reference points. The group of holomorphic automorphisms of $\Q_m$ is $HAut(\Q(S)) \simeq PSO(m+2, \KC)$.

\subsection{Quadric structures}
$M_m$ carries a {\em holomorphic quadric structure}, if there exists a holomorphic atlas $\{U_i, \varphi_i\}_{i\in I}$, such that
\begin{enumerate}
  \item $\varphi_i: U_i \hookrightarrow \Q_m$ is a holomorphic embedding for $i\in I$ and 
  \item $\varphi_i \circ \varphi_j^{-1}$, defined on $\varphi_j(U_{ij}) \subset \Q_m$, is the restriction of some automorphism from $\HAut(\Q_m) = PSO(m+2,\KC)$.
\end{enumerate}
Clearly $\Q_m$ carries a quadric structure. If $M$ carries a quadric structure and $M' \lra M$ is \'etale, then $M'$ carries a quadric structure. A major source of examples is

\begin{lemma}\label{UnivCov} If there exists an embedding of the universal covering space $\tilde{M} \hookrightarrow \Q_m$, such that $\pi_1(M)$ acts on $\tilde{M}$ by restricting automorphisms from $\HAut(\Q_m)$, then $M$ carries a quadric structure.
\end{lemma}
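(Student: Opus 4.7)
My plan is to construct the required atlas by pulling back the embedding of $\tilde{M}$ along local sections of the universal covering map $\pi: \tilde{M} \lra M$, and to show that the cocycle of transition functions is realized by elements of $\pi_1(M) \subset \HAut(\Q_m)$.

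First, I would choose an open cover $\{U_i\}_{i\in I}$ of $M$ by connected, simply connected open sets that are evenly covered by $\pi$, and such that every non--empty intersection $U_{ij} = U_i \cap U_j$ is connected (this is possible because $M$ is a manifold, e.g. by taking geodesically convex balls in some auxiliary Riemannian metric, or by refining any cover). For each $i$, pick one sheet $\tilde{U}_i \subset \tilde{M}$ of $\pi^{-1}(U_i)$ and let $s_i: U_i \lra \tilde{U}_i$ be the corresponding local section. Denoting by $\iota: \tilde{M} \hookrightarrow \Q_m$ the given embedding, I define
\[
  \varphi_i := \iota \circ s_i : U_i \hookrightarrow \Q_m.
\]
Each $\varphi_i$ is a holomorphic embedding, so condition (1) of the definition of a quadric structure is satisfied.

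Next I verify the cocycle condition (2). Fix $i,j$ with $U_{ij}\neq \emptyset$. The two sections $s_i$ and $s_j$ restricted to $U_{ij}$ map into two sheets $\tilde{U}_i|_{U_{ij}}$ and $\tilde{U}_j|_{U_{ij}}$ lying over the same connected set, so there is a unique deck transformation $\gamma_{ij} \in \pi_1(M)$ with $s_i|_{U_{ij}} = \gamma_{ij} \circ s_j|_{U_{ij}}$. By hypothesis $\gamma_{ij}$ acts on $\tilde{M}$ as the restriction of some $g_{ij} \in \HAut(\Q_m) = PSO(m+2,\KC)$, so on $\varphi_j(U_{ij})\subset \Q_m$ we have
\[
  \varphi_i \circ \varphi_j^{-1} = \iota \circ s_i \circ s_j^{-1} \circ \iota^{-1} = \iota \circ \gamma_{ij} \circ \iota^{-1} = g_{ij}|_{\varphi_j(U_{ij})}.
\]
This is exactly condition (2), so $\{U_i,\varphi_i\}_{i\in I}$ is a holomorphic quadric atlas on $M$.

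There is essentially no serious obstacle in this argument; the only point requiring a mild care is the connectedness of overlaps $U_{ij}$, which guarantees that the deck transformation $\gamma_{ij}$ is uniquely determined (rather than varying from component to component). Once the cover is chosen with this property, the rest is a formal verification, and the identification of the deck group with a subgroup of $\HAut(\Q_m)$ via $\iota$ does all the work.
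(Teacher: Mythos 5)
Your proof is correct. The paper states \Lem{UnivCov} without proof, and your argument --- pulling back the embedding along local sections of the universal covering over a cover with connected overlaps, so that the transition cocycle is realized by the deck group acting through $\HAut(\Q_m)$ --- is exactly the standard verification the authors evidently had in mind.
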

The map $SO(m+2, \KC)\lra PSO(m+2, \KC)$ is $2:1$ \'etale. The line bundle $\O_{\Q_m}(d)=\O_{\PN_{m+2}(\KC)}(d)|_{\Q_m}$ is $SO(m+2, \KC)$--homogeneous, and $PSO(m+2, \KC)$--homogeneous in the case $d$ even. Hence, for $d$ even, $\O_{\Q_m}(d)$ induces a line bundle on any manifold $M$ with a quadric structure.

\subsection{Development} (\cite{KO}, \cite{KoWu}) Let $M_m$ carry a quadric structure, let $\{(U_i, \varphi_i)\}_{i\in I}$ be an atlas as above. Choose one coordinate chart $(U_0, \varphi_0)$. Given a point $p \in M_m$, choose a chain of charts $(U_i, \varphi_i)_{i=0, \dots, r}$ such that $U_i \cap U_{i-1} \not= \emptyset$, $i = 1, \dots, r$ and $p \in U_r$. We have
 \[\varphi_{i-1}\circ \varphi_i^{-1} = g_i|_{\varphi_i(U_i\cap U_{i-1})}\]
for some $g_i \in HAut(\Q_m)$. Set $\psi(p) := (g_1 \circ g_2 \circ \cdots \circ g_r\circ \varphi_r)(p) \in \Q_m$. This gives a multivalued map $\psi: M \to \Q_m$ which is single valued in the case $M$ simply connected. It is called a {development} of $M$ to $\Q_m$. By construction, $\psi$ is immersive.

If $M$ is compact and simply connected, then $M \simeq \Q_m$. If $M$ is not simply connected, then let $\tilde{M}$ be its universal covering space and let $\psi: \tilde{M} \to \Q_m$ be a development of $\tilde{M}$. We get a natural map $\rho: \pi_1(M) \to HAut(\Q_m)$ such that $\psi(\gamma(p)) = \rho(\gamma)(\psi(p))$ for any $\gamma \in \pi_1(M)$ (\cite{KoWu}). 

\subsection{Dual space $\D_m$} \label{Dual} In a fixed basis of $\KR^{m+2}$, let $I_{m,2}$ be the symmetric form of signature $(m \times -, 2 \times +)$. The real Lie group
  \[SO(m,2) = \{A \in Sl_{m+2}(\KR) \mid A^tI_{m,2}A = I_{m,2}\}\]
is well known to have two connected components, we denote the one containing the identity $SO^0(m,2)$. The symmetric space $BDI_{m,2}$ dual to $\OG(2, m+2)$ parametrizes twoplanes in $\KR^{m+2}$ on which $I_{m,2} > 0$ is positive definit. Hence
 \[BDI_{m,2}  = \{M \in M_{m+2,2}(\KR) \mid M^tI_{m,2}M >0\}/\sim,\]
where $M \sim M'$ iff $M = M'D$ for some $D \in Gl_2(\KR)$. The twoplanes are generated by the columns of $M$. The group $SO(m,2)^0$ acts transitively on $BD \, I_{m,2}$ from the left, stabilizer of $(0_{m,2}, I_2)^t$ is $SO(m,2)^0 \cap S(O(m)\times O(2)) = SO(m) \times SO(2)$ and
 \[BDI_{m,2}  \simeq  SO(m,2)^0/SO(m) \times SO(2).\]
If we write $M = \left({M_1 \atop M_2}\right)$, $M_1 \in M_{m, 2}(\KR)$, then $M^tI_{m,2}M > 0$ implies $M_2 \in Gl_2(\KR)$. As $M \sim MM_2^{-1}$ we find the classical description
 \[BDI_{m,2}  \simeq \{m \in M_{m,2}(\KR) \mid m^tm < I_2\},\]
with the action of $SO(m,2)^0$ given by
  \[m \mapsto (Am+B)(Cm+D)^{-1}, \quad \left(\begin{array}{cc}
                                          A_{m,m} & B_{m,2} \\
                                          C_{2,m} & D_{2,2}
\end{array}\right) \in SO(m,2)^0.\]
By choosing an ON--Basis, any twoplane on which $I_{m,2}>0$ is represented by  $M = \left({M_1 \atop M_2}\right) \in M_{m+2, 2}(\KR)$ such that $M^tI_{m,2}M =I_2$ and $\det M_2>0$. In this description, $M, M' \in M_{m+2,2}(\KR)$ define the same twoplane iff there exists $D \in SO(2)$ such that $M = M'D$ and $M \mapsto M_i=M\left({i \atop 1}\right)$ gives
  \[BDI_{m, 2} \hookrightarrow \Q(I_{m,2}).\]
This identifies $BDI_{m, 2}$ and the open subset of $\Q(I_{m,2})$
  \[\D_m = \{[W] \in \Q(I_{m,2}) \mid W^tI_{m,2}\bar{W} > 0, i(\bar{W}_{m+1}W_{m+2}-W_{m+1}\bar{W}_{m+2})>0\}.\]
The group of holomorphic automorphisms of $\D_m$ is
 \[\HAut(\D_m) = PSO^0(m,2) = \{f \in \HAut(\Q(I_{m+2}) \mid f(\D_m) = \D_m\}.\]
By \Lem{UnivCov},

\begin{proposition}
  Any complex manifold whose universal covering space is $\D_m$ carries a quadric structure.
\end{proposition}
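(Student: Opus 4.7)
The strategy is to verify the hypotheses of \Lem{UnivCov} directly from the preceding discussion of $\D_m$.

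First I would fix an identification of the universal cover $\tilde M$ with $\D_m$. The preceding paragraphs exhibit a canonical open embedding $\D_m \hookrightarrow \Q(I_{m,2})$, and since any non--degenerate symmetric $q$ yields $\Q(q) \simeq \Q_m$ via a projective linear change of coordinates, composing gives a holomorphic embedding $\tilde M \simeq \D_m \hookrightarrow \Q_m$. This supplies the first half of what \Lem{UnivCov} requires.

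Next I would check that the deck action of $\pi_1(M)$ on $\tilde M$ lies inside $\HAut(\Q_m)$. The deck transformations are biholomorphisms of $\tilde M \simeq \D_m$, so the action factors through $\HAut(\D_m)$. The key identity recorded just before the proposition is
\[
\HAut(\D_m) \;=\; \{f \in \HAut(\Q(I_{m,2})) \mid f(\D_m)=\D_m\} \;\subset\; \HAut(\Q(I_{m,2})),
\]
which means every element of $\HAut(\D_m)$ is literally the restriction of an element of $\HAut(\Q(I_{m,2})) = PSO(m+2,\KC)$. Transporting along the isomorphism $\Q(I_{m,2}) \simeq \Q_m$, the deck action of $\pi_1(M)$ is therefore the restriction of a representation $\pi_1(M) \to \HAut(\Q_m)$.

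At this point both hypotheses of \Lem{UnivCov} are in place: $\tilde M$ embeds into $\Q_m$ and $\pi_1(M)$ acts via restriction of elements of $\HAut(\Q_m)$. Applying that lemma yields a quadric structure on $M$, completing the proof. There is no genuine obstacle here; the statement is essentially a bookkeeping consequence of the explicit description of $\HAut(\D_m)$ as the $\D_m$--stabilizer inside $\HAut(\Q(I_{m,2}))$, the only subtlety being to keep straight the identifications $\Q(I_{m,2}) \simeq \Q_m$ so that one lands in $\HAut(\Q_m)$ rather than merely in $\HAut(\Q(I_{m,2}))$.
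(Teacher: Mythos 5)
Your proposal is correct and follows exactly the route the paper intends: the paper's ``proof'' is simply the citation of \Lem{UnivCov} together with the preceding identifications $\D_m \hookrightarrow \Q(I_{m,2}) \simeq \Q_m$ and $\HAut(\D_m) = \{f \in \HAut(\Q(I_{m,2})) \mid f(\D_m) = \D_m\}$, which is precisely the bookkeeping you carry out. Nothing is missing.
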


In order to see that $\D_m$ is a bounded symmetric domain let $Z_{m+1} = \frac{1}{\sqrt{2}}(W_{m+1} - iW_{m+2})$, $Z_{m+2} = \frac{1}{\sqrt{2}}(W_{m+1} + iW_{m+2})$ and $Z_k = W_k$ for $k=1,\dots,m$. In these new coordinates the defining equation for $\Q(I_{m,2})$ becomes 
  \[2Z_{m+1}Z_{m+2} = Z_1^2 + \cdots + Z_m^2,\]
i.e. the change of coordinates transforms $\Q(I_{m,2})$ into $\Q(q_0)$, where 
\[q_0 = \left(\begin{array}{ccc}
                    I_m & 0 & 0 \\
                    0 & 0 & -1 \\
                    0 & -1 & 0
                   \end{array}\right).\]
The intersection $\Q(q_0) \cap \{Z_{m+2} \not= 0\}$ is isomorphic to $\KC^m$ by
 \[j: \KC^m \lra \Q(q_0), \quad z \mapsto [z:\mbox{$\frac{1}{2}$}z^tz:1].\]
Now $\D_m = \{[Z] \in \Q(q_0) \mid  Z^tI_{m,2}\bar{Z} > 0, |Z_{m+1}|^2 < |Z_{m+2}|^2\} \subset \Q(q_0) \cap \{Z_{m+2} \not= 0\}$ and one checks $\D_m = j(\{z \in \KC^m \mid z^t\bar{z} < 1+|\frac{1}{2}z^tz|^2 < 2\})$.
 
\subsection{Conformally hyperelliptic varieties} \label{ConfHyp}
(see also \cite{KOQuad}, \cite{KoWu}) Let $\COAff_m(\KC)$ be the group of conformal affine maps
  \[\sigma_{A,b,c}: \KC^m \lra \KC^m, \;\; \sigma_{A,b,c}(z) = cAz+b, \quad A \in O(m, \KC), b \in \KC^m, c \in \KC^*.\]
A direct computation shows $\rho: \COAff_m(\KC) \lra O(q_0, \KC)$ given by
\[\rho(\sigma_{A,b,c}) =  \left(\begin{array}{ccc}
            A & 0_{m,1} & \frac{1}{c}b \\
            b^tA & c & \frac{1}{2c}b^tb \\
             0_{1,m} & 0 & \frac{1}{c}
\end{array}\right)\]
is a group morphism and $j(\sigma_{A,b,c}(z)) = \rho(\sigma_{A,b,c})(j(z))$ with $j$ from above.

We will call a compact complex manifold $M$ {\em conformally hyperellitpic}, if $M$ is not a torus but isomorphic to a quotient of $\KC^m$ by some fixed point free subgroup $\pi_1 \subset COAff_m(\KC)$ which contains a complete lattice $\Lambda$ as a subgroup of finite index. 

Such an $M$ is a finite {\'e}tale quotient of a torus. By \Lem{UnivCov}, using $j$ and $\rho$ we have
\begin{proposition} \label{TorHyp}
  Tori and conformally hyperelliptic manifolds carry a quadric structure.
\end{proposition}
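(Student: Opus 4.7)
The plan is to apply \Lem{UnivCov} uniformly in both cases, exploiting the map $j:\KC^m \hookrightarrow \Q(q_0)$ and the homomorphism $\rho:\COAff_m(\KC) \to O(q_0,\KC)$ constructed in the paragraphs immediately preceding the proposition. By hypothesis each class of manifolds can be written as a fixed-point-free quotient $M = \KC^m/\Gamma$ with $\Gamma \subset \COAff_m(\KC)$: for a torus, $\Gamma = \Lambda$ is a lattice of pure translations $\sigma_{I_m,\lambda,1}$; for a conformally hyperelliptic manifold, $\Gamma = \pi_1$ is given directly by the definition in \ref{ConfHyp}. In particular the universal cover is $\tilde M = \KC^m$ in both cases.

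Using the identification $\Q(q_0) \simeq \Q_m$, the map $j$ embeds $\tilde M = \KC^m$ as an open subset $j(\KC^m) \subset \Q_m$, which verifies the first hypothesis of \Lem{UnivCov}. For the second hypothesis, the equivariance identity
\[j(\sigma_{A,b,c}(z)) = \rho(\sigma_{A,b,c})(j(z))\]
shows that each deck transformation $\sigma \in \Gamma$ acts on $\tilde M$ as the restriction to $j(\KC^m)$ of the projective automorphism of $\Q_m$ induced by $\rho(\sigma) \in O(q_0,\KC)$. This is precisely the statement that $\pi_1(M)$ acts by restrictions of automorphisms in $\HAut(\Q_m)$, so \Lem{UnivCov} delivers a holomorphic quadric structure on $M$.

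The one step I would pause to verify carefully is the passage from $O(q_0,\KC)$ to the target group $\HAut(\Q_m)=PSO(m+2,\KC)$ appearing in \Lem{UnivCov}. A direct cofactor expansion of the displayed matrix for $\rho(\sigma_{A,b,c})$ gives $\det \rho(\sigma_{A,b,c}) = \det A \in \{\pm 1\}$, so translations (and hence all of $\Gamma$ when $M$ is a torus) automatically land in $SO(q_0,\KC)$. For a conformally hyperelliptic $M$ with $A \in O(m,\KC)\setminus SO(m,\KC)$ one must check that the induced projective transformation still represents an element of $\HAut(\Q_m)$, either by appealing to the identification of $\HAut$ with the full holomorphic automorphism group of the quadric, or by composing with a fixed orientation-reversing element to reduce to the $SO$ case. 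Beyond this bookkeeping with $O$ versus $SO$, no geometric obstacle arises, and both assertions of the proposition follow immediately from \Lem{UnivCov}.
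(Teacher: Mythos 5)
Your proof is correct and follows exactly the paper's own argument, which consists of the single line ``By \Lem{UnivCov}, using $j$ and $\rho$'' — i.e.\ embed $\tilde M=\KC^m$ via $j$ into $\Q(q_0)\simeq\Q_m$ and transport the deck group through $\rho$. The $O$ versus $SO$ bookkeeping you flag is a real (if minor) point that the paper silently glosses over, and your proposed resolutions are adequate.
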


\section{Holomorphic conformal structures}
\setcounter{equation}{0}

A complex manifold $M$ is said to carry a {\em holomorphic conformal structure} if there exists a non degenerate holomorphic
  \[g \in H^0(M, S^2\Omega_M \otimes L^*),\]
where $L \in \Pic(M)$ is some line bundle. An equivalent definition involves a $CO(m, \KC) = \{cA \mid A \in O(m, \KC), c \in \KC^*\}$ bundle in the frame bundle of $M$, see \cite{KOQuad}.

We will use the same letter $g$ to denote the induced map from $T_M \otimes L^*$ to $\Omega_M^1$ by $g$. By assumption $g$ gives an isomorphism
  \begin{equation} \label{LK}
      g: \;\; T_M \otimes L^* \simeq \Omega_M^1 \quad \mbox{and} \quad -2K_M \simeq mL.
  \end{equation}
On the quadric $\Q(q)$ in \Formel{Qq}, $\sum_{ij}q_{ij}dZ_idZ_j$ induces a holomorphic conformal structure with the choice $L = \O_{\PN_{m+1}(\KC}(-2)|_{\Q_m}$. It is invariant under the action of $\HAut(\Q_m)$. A manifold $M$ with a quadric structure carries a holomorphic conformal structure. We will call the conformal structure {\em flat} in this case.

\subsection{Atiyah class of $\Omega_M^1$} The {\em Atiyah class} $b(E) \in H^1(M, \Hom(E,E)\otimes \Omega_M^1)$ associated to a holomorphic vector bundle $E$ on a complex manifold $M$ (\cite{At}) is the obstruction to the holomorphic splitting of the first jet sequence
  \[0 \lra E\otimes \Omega_M^1 \lra J_1(E) \lra E \lra 0.\]
Let $\U = \{U_{\alpha}\}$ be a trivializing atlas, i.e., $E|_{U_{\alpha}} \simeq \KC^r \times U_{\alpha} =: E_{\alpha}$. Denote the induced transition functions by $t_{\alpha\beta}: E_{\beta} \lra E_{\alpha}$, defined over $U_{\alpha\beta}$, i.e., $t_{\alpha \beta} \in Gl_{m}(\O_{U_{\alpha,\beta}})$. Then the matrix of holomorphic oneforms $dt_{\alpha\beta}$ is a well defined $1$--cocycle of $\Hom(E,E)\otimes \Omega_M^1$, representing $b(E)$. Under Dolbeault $H^1(M, \Hom(E,E) \otimes \Omega_M^1) \simeq H^{1,1}(M, \Hom(E,E))$, $b(E)$ corresponds to $[\Theta_h]$, where $\Theta_h$ is the curvature tensor associated to some hermitian metric $h$ on $E$. In the K\"ahler case the first Chern class of $E$ is given by $tr (b(E)) = -\frac{1}{2i\pi}c_1(E) \in H^1(M, \Omega_M^1)$. This is why we normalize $a(E) := -\frac{1}{2i\pi} b(E)$.

Consider $E=T_M$. Let $\U = \{(U_{\alpha}, z_{\alpha}^1, \dots, z_{\alpha}^m)\}$ be an atlas. Transition functions of $T_M$ are $t_{\alpha\beta} = (\frac{\partial z_{\alpha}^i}{\partial z_{\beta}^q})_{i,q}$ and $b(T_M)$ is represented by 
  \[\sum_{iqr}\frac{\partial^2 z_{\alpha}^i}{\partial z_{\beta}^q\partial z_{\beta}^r} \frac{\partial}{\partial z_{\alpha}^i}\otimes dz_{\beta}^q \otimes dz_{\beta}^r = \sum_{ipqr}\frac{\partial^2 z_{\alpha}^i}{\partial z_{\beta}^q\partial z_{\beta}^r}\frac{\partial z_{\beta}^p}{\partial z_{\alpha}^i}\frac{\partial}{\partial z_{\beta}^p} \otimes dz_{\beta}^q \otimes\otimes dz_{\beta}^r.\]

Let $M$ be compact K\"ahler with a holomorphic conformal structure. Then $c_1(K_M) \in H^1(M, \Omega_M^1)$ and via $g: T_M\otimes L^* \lra \Omega_M^1$
  \[g^{-1}(c_1(K_M)) \in H^1(M, T_M \otimes L^*).\]
Then $g \otimes g^{-1}(c_1(K_M)) \in H^1(M, S^2\Omega_M \otimes L \otimes T_M \otimes L^*) \simeq H^1(M, S^2\Omega_M \otimes T_M)$. The following Proposition is a consequence of standard formulas from Riemannian geometry interpreted in the holomorphic setting.
\begin{proposition} \label{AtCl}
  On a compact K\"ahler manifold with a holomorphic conformal structure $g \in H^0(M, S^2\Omega_M^1 \otimes L)$, the normalized Atiyah class of $\Omega_M^1$ is given by
   \begin{equation} \label{AtClHCS}
a(\Omega^1_M) =  \frac{c_1(K_M)}{m} \otimes id + id \otimes \frac{c_1(K_M)}{m} - g \otimes g^{-1}(\frac{c_1(K_M)}{m})
  \end{equation}
in $H^1(M, \Omega_M^1 \otimes T_M \otimes \Omega_M^1)$.
\end{proposition}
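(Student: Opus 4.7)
The plan is to combine the Leibniz rule for Atiyah classes with the structure-group reduction provided by the conformal form $g$. First, I would apply the Leibniz rule to the conformal isomorphism $g:T_M\otimes L^*\simeq\Omega^1_M$ from \Formel{LK}:
\[
a(\Omega^1_M) \;=\; a(T_M\otimes L^*) \;=\; a(T_M) + c_1(L^*)\cdot\id,
\]
where the identification $\End(T_M\otimes L^*)=\End(\Omega^1_M)$ is realised by conjugation with $g$. The determinant relation $-2K_M\equiv mL$ forces $c_1(L^*)=\tfrac{2}{m}c_1(K_M)$, so this piece already contributes the two $\id$-type summands of the claimed formula. Moreover, the symmetry of mixed partials in the \v{C}ech cocycle formula for $b(T_M)$ displayed above shows that $a(T_M)$ in fact lies in $H^1(M,T_M\otimes S^2\Omega^1_M)\subset H^1(M,T_M\otimes\Omega^1_M\otimes\Omega^1_M)$; under $g$-conjugation this translates into a symmetry of $a(\Omega^1_M)$ between its two $\Omega^1_M$-slots.

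Next, I would use the fact that the holomorphic conformal structure is precisely a reduction of the structure group of $\Omega^1_M$ from $\mathrm{GL}(m,\KC)$ to $CO(m,\KC)$. Consequently $a(\Omega^1_M)$ lies in the subspace $H^1(\Omega^1_M\otimes\mathfrak{co}(\Omega^1_M))\subset H^1(\Omega^1_M\otimes\End(\Omega^1_M))$. Under the decomposition $\mathfrak{co}(\Omega^1_M)=\KC\cdot\id\oplus\mathfrak{so}(\Omega^1_M,g)$, the trace identity $\mathrm{tr}(a(\Omega^1_M))=c_1(K_M)$ pins down the scalar part to $\tfrac{1}{m}c_1(K_M)\otimes\id$, while the $\mathfrak{so}(g)$-component is characterised pointwise by the $g$-skew condition $g_{ia}A^a_j+g_{ja}A^a_i=0$. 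Solving this together with the $\Omega^1_M$-slot symmetry from the first step singles out the unique combination $-g\otimes g^{-1}(c_1(K_M)/m)$; this is the holomorphic analogue of the Riemannian computation of the Levi--Civita curvature of a conformally adapted metric in terms of its Schouten-type trace and the metric.

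The main obstacle will be the sign and weight bookkeeping when passing between $a(T_M)$ and $a(\Omega^1_M)$: the transpose relation on $\End$ contributes a sign, and the $g$-conjugation swaps upper and lower index positions, so the sum $a(T_M)+c_1(L^*)\cdot\id$ must be rewritten using $g\otimes g^{-1}$ in order to land in $H^1(\Omega^1_M\otimes T_M\otimes\Omega^1_M)$. Equivalently, one must verify that the traceless $g$-symmetric (``holomorphic Weyl'') component of $a(\Omega^1_M)$ vanishes, which is forced by the combination of the slot-symmetry inherited from $b(T_M)$ and the $\mathfrak{co}$-valuedness coming from the conformal reduction. Once this is checked, the three terms of the claimed formula emerge from the $\mathfrak{co}$-decomposition of $a(\Omega^1_M)$.
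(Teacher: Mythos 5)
Your strategy is genuinely different from the paper's. The paper proves the formula by a direct \v{C}ech computation: it picks local representatives $g_\alpha$ of the conformal structure with $g_\alpha=l_{\alpha\beta}g_\beta$, forms the holomorphic Christoffel symbols $\Gamma^i_{jk}(g_\alpha)$ exactly as in Riemannian geometry, and checks via the base--change formula that the $0$--cochain $\sum\Gamma^i_{jk}\,\partial_i\otimes dz^j\otimes dz^k$ has coboundary equal to the difference between the mixed--partials cocycle for $b(T_M)$ and an explicit cocycle for the right--hand side. Your proposal replaces this by a characterisation of the class through constraints, and the individual ingredients are correct: $a(T_M)$ lies in the image of $H^1(M,\mathcal A)$ with $\mathcal A=T_M\otimes S^2\Omega^1_M$ (coordinate cocycle), and in the image of $H^1(M,\mathcal B)$ with $\mathcal B=\mathfrak{co}(g)\otimes\Omega^1_M$ (conformal reduction of the frame bundle); moreover the pointwise linear algebra is right --- a tensor in $T_M\otimes\Omega^1_M\otimes\Omega^1_M$ lying in $\mathcal A\cap\mathcal B$ is necessarily $\lambda\otimes\id+\id\otimes\lambda-g\otimes g^{-1}(\lambda)$ for a unique $1$--form $\lambda$, since the braid argument kills the $g$--skew, slot--symmetric, traceless part.

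The gap is that the two memberships are witnessed by \emph{different} cocycle representatives (coordinate frames for the slot symmetry, $CO$--adapted frames for the $\mathfrak{co}$--valuedness), and from $a\in\mathrm{im}\,H^1(\mathcal A)\cap\mathrm{im}\,H^1(\mathcal B)$ one cannot in general conclude $a\in\mathrm{im}\,H^1(\mathcal A\cap\mathcal B)$, which is what your uniqueness step actually needs; so the claim that the ``holomorphic Weyl component'' vanishes is only established pointwise, not in $H^1$. Here the conclusion can be rescued, but by an argument you must supply: a rank count ($\rk\mathcal A+\rk\mathcal B=m^3+m$ while $\rk(\mathcal A\cap\mathcal B)=m$) shows $\mathcal A+\mathcal B=T_M\otimes\Omega^1_M\otimes\Omega^1_M$, and the cohomology sequence of $0\to\mathcal A\cap\mathcal B\to\mathcal A\oplus\mathcal B\to T_M\otimes\Omega^1_M\otimes\Omega^1_M\to 0$ then identifies $\mathrm{im}\,H^1(\mathcal A)\cap\mathrm{im}\,H^1(\mathcal B)$ with $\mathrm{im}\,H^1(\mathcal A\cap\mathcal B)\simeq\mathrm{im}\,H^1(\Omega^1_M)$, after which the trace pins down $\lambda=c_1(K_M)/m$. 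The paper's Christoffel cochain is precisely the explicit interpolation between the two representatives that makes this step unnecessary. A smaller inaccuracy: the Leibniz term $c_1(L^*)\cdot\id=\frac{2}{m}c_1(K_M)\cdot\id$ occupies a single form slot and is not equal to the symmetric sum $\frac{c_1(K_M)}{m}\otimes\id+\id\otimes\frac{c_1(K_M)}{m}$; redistributing it is part of what has to be proved, so the real difficulty is this cohomological one rather than ``sign and weight bookkeeping''.
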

Note $\Omega_M^1 \otimes T_M \otimes \Omega_M^1 \simeq \Omega_M^1 \otimes {\rm End}(T_M) \simeq {\rm End}(T_M) \otimes \Omega_M^1$. \Prop{AtCl} implies certain relations among the Chern classes of $M$. For example 
  \begin{equation} \label{C2}
    m^2c_2(M) = ({m \choose 2} + 1)c_1^2(M) \;\; \mbox{$H^2(M, \Omega_M^2)$}.
  \end{equation}
Roughly speaking any algebraic relation among Chern classes on the quadric holds on $M$. For details see \cite{KOQuad}. 

\begin{proof}
  Choose an atlas $\U = \{(U_{\alpha}, z_{\alpha}^1, \dots, z_{\alpha}^m)\}$ such that, locally, $g$ is given by 
  \[g_{\alpha} = \sum_{i,j}g_{\alpha ij}dz_\alpha^i\otimes dz_\alpha^j, \quad g_{\alpha ij} \in \O_M(U_\alpha)\]
with $g_{\alpha ij} = g_{\alpha ji}$ and $g_{\alpha} = l_{\alpha\beta}g_{\beta}$. Here $l_{\alpha\beta} \in \O_{U_{\alpha\beta}}^*$ is a $1$--cocycle defining $L$. As a map $T_M \lra \Omega_M^1 \otimes L^*$ in the chosen frame, $g$ is represented by the matrix $g_{\alpha ij}$, the inverse is denoted $(g_{\alpha}^{ij})$. As $2K_M = mL$, $\frac{1}{2}\sum_h \frac{\partial l_{\alpha\beta}}{\partial z_{\beta}^h}dz_{\beta}^h$ represents $c_1(K_M)/m$ in $H^1(M, \Omega_M^1)$. We have to show that there is a cocylce solution to
 \[\sum_{pqr}\left(\sum_i \frac{\partial z_\beta^p}{\partial z_\alpha^i}\frac{\partial^2 z_\alpha^i}{\partial z_\beta^q \partial z_\beta^r} -  \frac{1}{2}\delta_{pq}
\frac{\partial l_{\alpha\beta}}{\partial z_{\beta}^r} -\frac{1}{2} 
\delta_{pr}\frac{\partial l_{\alpha\beta}}{\partial z_{\beta}^q} +\frac{1}{2}
g_{\beta}^{ph} \frac{\partial l_{\alpha\beta}}{\partial z_{\beta}^h}g_{\beta qr}\right)\frac{\partial}{\partial z_{\beta}^p}\otimes dz_{\beta}^q\otimes dz_{\beta}^r.\]
This is a consequence of standard formulas from Riemannian geometry: as in the Riemannian case we associate to $g_{\alpha}$ a holomorphic affine connection on $U_{\alpha}$ by
   \begin{equation} \label{Chris}
     \Gamma_{jk}^i = \Gamma_{jk}^i(g_{\alpha}) = \frac{1}{2}\sum_h g_{\alpha}^{ih}\left(\frac{\partial g_{\alpha hj}}{\partial z_{\alpha}^k} + 
\frac{\partial g_{\alpha hk}}{\partial z_{\alpha}^j} -
\frac{\partial g_{\alpha jk}}{\partial z_{\alpha}^h}\right).
   \end{equation}
Let $\tilde{g}_{\beta} = l_{\alpha\beta}g_{\beta}$. Then $g_\alpha = \tilde{g}_\beta$ on $U_{\alpha\beta}$ as in the Riemannian case. Therefore, if we put $\tilde{\Gamma}_{qr}^p = \Gamma_{qr}^p(\tilde{g}_{\beta})$, then we have the well known formula of base change 
for the Christoffel symbols: 
  \[\tilde{\Gamma}_{\beta qr}^p = \sum_{ijk}\frac{\partial z_\alpha^j}{\partial z_\beta^q} \frac{\partial z_\alpha^k}{\partial z_\beta^r} \Gamma_{\alpha jk}^i
\frac{\partial z_\beta^p}{\partial z_\alpha^i}+
\sum_i\frac{\partial z_\beta^p}{\partial z_\alpha^i}\frac{\partial^2 z_\alpha^i}{\partial z_\beta^q \partial z_\beta^r}.\]
Using product rule and $g_\beta g_\beta^{-1}=id$ we may express $\tilde{\Gamma}_{qr}^p$ in terms of $\Gamma_{qr}^p(g_{\beta})$:
  \[\tilde{\Gamma}_{\beta qr}^p = \frac{1}{2}\sum_h l_{\alpha\beta}^{-1}\left(\delta_{pq}
\frac{\partial l_{\alpha\beta}}{\partial z_{\beta}^r} + 
\delta_{pr}\frac{\partial l_{\alpha\beta}}{\partial z_{\beta}^q} -
g_{\beta}^{ph} \frac{\partial l_{\alpha\beta}}{\partial z_{\beta}^h}g_{\beta qr}\right) + \Gamma_{qr}^p,\]
where $\Gamma_{qr}^p = \Gamma_{qr}^p(g_{\beta})$. Then $\sum_{ijk}\Gamma_{jk}^i \frac{\partial}{\partial z_{\alpha}^i}\otimes dz_{\alpha}^j\otimes dz_{\alpha}^k \in Z(\U, T_M\otimes\Omega_M^1\otimes \Omega_M^1)$ is the desired cocycle solution.
\end{proof}

\subsection{Extensions}
Let $\Omega$ be an arbitrary vector bundle on some complex manifold $M$. Let $L \in \Pic(M)$ be some line bundle and $g \in H^0(M, S^2\Omega \otimes L)$. Assume there is a vector bundle $F$, $rk F > 0$, and an exact sequence 
  \begin{equation} \label{SeqE} 
    0 \lra F \lra \Omega \lra F^*\otimes L^* \lra 0.
  \end{equation}
Dualize and multiply by $L^*$. We say $g$ {\em is compatible with \Formel{SeqE}}, if we obtain a commuting diagram
 \[\xymatrix{0 \ar[r] & F \ar[d]^{\varphi} \ar[r] & \Omega^*\otimes L^* \ar[d]^{g} \ar[r] & F^*\otimes L^* \ar[d]^{\psi}\ar[r] & 0 \\
0 \ar[r] & F \ar[r] & \Omega \ar[r] & F^*\otimes L^* \ar[r] & 0.}\]
Obviously, $g$ is compatible in the case $\Hom(F, F^*\otimes L) = 0$. A compatible $g \in H^0(M, S^2\Omega \otimes L)$ is non--degenerate if and only if $\varphi$ and $\psi$ are isomorphisms.

\begin{lemma} \label{CSeq}
  Let $e \in H^1(M, F \otimes F \otimes L)$ be the extension class of \Formel{SeqE}. Assume $\Hom(F, F) \simeq \KC$. Then there exists a non--degenerate $g \in H^0(M, S^2\Omega \otimes L)$ compatible with \Formel{SeqE} if and only if $e \in H^1(M, \bigwedge^2 F \otimes L)$.
\end{lemma}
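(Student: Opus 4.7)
The plan is to turn the existence problem into a statement about extension classes. Set $Q := F^* \otimes L^*$. Under the assumption $\Hom(F,F) \simeq \KC$ one also has $\Hom(Q,Q) \simeq \Hom(F,F) \simeq \KC$, so $\varphi$ and $\psi$ are automatically scalars: $\varphi = \lambda \cdot \id_F$ and $\psi = \mu \cdot \id_Q$, and non-degeneracy amounts to $\lambda, \mu \neq 0$. I would then check that the symmetry of $g$---viewed as a self-dual morphism $\hat g : \Omega^* \otimes L^* \to \Omega$ satisfying $\hat g = \hat g^\vee \otimes L^*$---forces $\lambda = \mu$: the map induced on the sub $F$ by $\hat g$ is $\lambda \id_F$, whereas the map induced on the same sub by $\hat g^\vee \otimes L^*$ is the transpose of $\psi$, tensored with $L^*$, which unwinds to $\mu \id_F$. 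After rescaling I may therefore assume $\lambda = \mu = 1$.

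With this normalization, commutativity of the compatibility diagram is equivalent to equality of the extension classes of its top and bottom rows in $H^1(M, F \otimes F \otimes L)$. A direct \v{C}ech cocycle computation shows that dualizing an extension $0 \to A \to E \to B \to 0$ represented by $u \in Z^1(B^* \otimes A)$ produces $0 \to B^* \to E^* \to A^* \to 0$ represented by $-u^T \in Z^1(A \otimes B^*)$. Applied to $(\ref{SeqE})$ and combined with the twist by $L^*$, this gives the class of the top row as $-\sigma(e)$, where $\sigma : F \otimes F \otimes L \to F \otimes F \otimes L$ interchanges the two $F$ factors. The equation $-\sigma(e) = e$ then reads $\sigma(e) = -e$, which is precisely the condition $e \in H^1(M, \bigwedge^2 F \otimes L)$.

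For the converse, given such an $e$ the equality $e = -\sigma(e) = e^{\mathrm{top}}$ already produces an isomorphism $\hat g_0 : \Omega^* \otimes L^* \to \Omega$ with both induced scalars equal to $1$; this $\hat g_0$ need not be symmetric. I would then take the symmetrization
\[
   \hat g := \tfrac{1}{2}\bigl(\hat g_0 + \hat g_0^\vee \otimes L^*\bigr),
\]
which is symmetric by construction, still induces the identity on sub and quotient (because both summands do), and is non-degenerate because locally it is upper-triangular with identity blocks on the diagonal.

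The main difficulty is the minus sign in $e^{\mathrm{top}} = -\sigma(e)$: it is precisely this sign that pushes $e$ into the antisymmetric---rather than the symmetric---summand of $F \otimes F \otimes L$. Getting the \v{C}ech computation and the identification of the relevant $\mathrm{Ext}^1$ groups with $H^1(M, F \otimes F \otimes L)$ right, so that the sign really comes out as $-1$, is where the proof must be carried out with care.
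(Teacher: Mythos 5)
Your proof is correct, but it takes a genuinely different route from the paper's. The paper filters $S^2\Omega\otimes L$ using the two exact sequences $0\to K\otimes L\to S^2\Omega\otimes L\to S^2F^*\otimes L^*\to 0$ and $0\to S^2F\otimes L\to K\otimes L\to F\otimes F^*\to 0$ induced by \Formel{SeqE}, identifies the compatible $g$'s with $H^0(M,K\otimes L)$, and reduces everything to lifting $\id_F$ through the coboundary $\delta\colon H^0(M,F\otimes F^*)\to H^1(M,S^2F\otimes L)$, whose value on $\id_F$ is the image of $e$ under symmetrization; symmetry of the sought $g$ is thus built in from the start and no separate symmetrization step is needed. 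You instead compare extension classes of the two rows of the compatibility diagram: $\Hom(F,F)\simeq\KC$ (hence also $\Hom(F^*\otimes L^*,F^*\otimes L^*)\simeq\KC$) makes $\varphi,\psi$ scalars, symmetry of $\hat g$ forces them equal, and the sign rule for dualizing an extension turns commutativity into $\sigma(e)=-e$. Your sign is right --- a useful sanity check is $\rk F=1$, where $\bigwedge^2F=0$ and the lemma must force splitting, which only happens with the minus sign, consistent with the remark after the lemma --- and the symmetrization $\frac12\bigl(\hat g_0+\hat g_0^\vee\otimes L^*\bigr)$ in the converse is legitimate, since $\hat g_0^\vee\otimes L^*$ again respects the filtrations and induces the identity on sub and quotient, so the average is symmetric and is an isomorphism by the five lemma. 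The two arguments establish the same equivalence; yours makes the source of the antisymmetry (the minus sign in dualizing an extension) explicit, while the paper's avoids the symmetrization step by working inside $S^2\Omega\otimes L$ throughout.
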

Here we use the canocial isomorphism $F \otimes F \simeq S^2F \oplus \bigwedge^2 F$. 
Note that in the particular case $rk F = 1$ the condition $e \in H^1(M, \bigwedge^2 F \otimes L)$ implies the holomorphic splitting of \Formel{SeqE}.

\begin{proof}
 From \Formel{SeqE} we obtain
  \[0 \lra K \otimes L \lra S^2\Omega \otimes L \lra S^2F^* \otimes L^* \lra 0\]
  \[0 \lra S^2F \otimes L \lra K \otimes L \lra F \otimes F^* \lra 0.\]
The set of compatible $g$'s is $H^0(M, K\otimes L) \subset H^0(M, S^2\Omega \otimes L)$. A section coming from $H^0(M, S^2F \otimes L)$ will be degenerated. 

Since $\Hom(F,F) \simeq \KC$, a compatible non--degenerate $g$ exists if and only if we can lift $id_{F} \in H^0(M, F \otimes F^*)$ to some $g \in H^0(M, K\otimes L)$. Such a $g$ exists if and only id $id_{F}$ is mapped to zero by the the coboundary map $\delta:  H^0(M, F \otimes F^*) \lra H^1(M, S^2F\otimes L)$. Now $\delta(\id_F)$ is the image of $e$ under $H^1(M, F\otimes F \otimes L) \lra  H^1(M, S^2F\otimes L)$. Therefore $g$ exists if and only if $e \in H^1(M, \bigwedge^2 F \otimes L)$.
\end{proof}

\section{Hyperelliptic manifolds}
\setcounter{equation}{0}

A compact complex manifold $M_m$ is called {\em hyperelliptic} if $M$ is a finite {\'e}tale quotient of a complex torus $T = \KC^m/\Lambda$. The universal covering space of such an $M$ can be identified with $\KC^m$ such that $\pi_1(M) \subset \HAut(\KC^m)$ can be identified with a subgroup of $\Aff_m(\KC)$, the group of affine maps $\sigma_{A,b}: z \mapsto Az+b$, $A\in Gl_m(\KC)$, $b\in \KC^m$. 

The following description can essentially be found in \cite{KOQuad}:

\begin{theorem}
  The following conditions on a hyperelliptic manifold $M_m$ are equivalent:
  \begin{enumerate}
    \item $M$ carries a quadric structure.
    \item $M$ carries a holomorphic conformal sructure.
    \item $M$ is conformally hyperelliptic.
   \end{enumerate}
\end{theorem}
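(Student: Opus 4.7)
The implications (3) $\Rightarrow$ (1) and (1) $\Rightarrow$ (2) are already available: the first is \Prop{TorHyp}, and the second is the observation recalled above that an atlas of quadric charts pulls back the ambient holomorphic conformal structure of $\Q_m$. The content of the theorem is therefore (2) $\Rightarrow$ (3). My plan is to lift the conformal structure to an intermediate torus cover and use the triviality of its tangent bundle to force the structure to be constant; once this is in hand, $\pi_1(M)$-compatibility translates directly into the conformal group condition on the affine action.

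Let $\pi : T \to M$ be a finite {\'e}tale torus cover, with $T = \KC^m/\Lambda$, and set $g_T = \pi^*g$, $L_T = \pi^*L$. The first step is to show $L_T \simeq \O_T$. Since $T_T$ and $\Omega_T^1$ are both holomorphically trivial of rank $m$, the isomorphism $T_M\otimes L^* \simeq \Omega_M^1$ from \Formel{LK} pulls back to $L_T^{*\oplus m} \simeq \O_T^{\oplus m}$; taking determinants yields $mL_T \simeq \O_T$, so $c_1(L_T) = 0$ and $L_T \in \Pic^0(T)$. On the other hand, $g_T$ is a nonzero section of $S^2\Omega_T^1 \otimes L_T \simeq L_T^{\oplus \binom{m+1}{2}}$, giving $H^0(T, L_T) \neq 0$; a line bundle in $\Pic^0$ of a complex torus with a nonzero global section is trivial, so $L_T \simeq \O_T$. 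Consequently $g_T \in H^0(T, S^2\Omega_T^1)$ is a constant non-degenerate symmetric bilinear form $G_0$ on $\KC^m$. I then choose a trivialization of the pullback $\tilde L$ to the universal cover $\KC^m$ in which $\Lambda$ acts trivially (possible because $L_T$ is trivial), so that $\tilde g$ is literally the constant form $G_0$.

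For any $\gamma \in \pi_1(M)$ with $\sigma_\gamma(z) = A_\gamma z + b_\gamma$, compatibility of $\tilde g$ with the $\pi_1(M)$-action reads $\sigma_\gamma^*\tilde g = c(\gamma)\tilde g$ for some $c(\gamma) \in \KC^*$, which in matrix form is $A_\gamma^t G_0 A_\gamma = c(\gamma) G_0$. Thus $A_\gamma$ lies in the conformal group of $G_0$, and after an initial linear change of coordinates on $\KC^m$ diagonalizing $G_0$ to $I_m$, each $\sigma_\gamma$ belongs to $\COAff_m(\KC)$. Since $\Lambda \subset \pi_1(M)$ is a finite-index lattice, this verifies that $M$ is conformally hyperelliptic. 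The step I expect to require the most care is the identification $L_T \simeq \O_T$ together with the $\Lambda$-invariant trivialization of $\tilde L$ on the universal cover; once these are secured, the classical affine transformation law for a constant symmetric form yields the conclusion directly.
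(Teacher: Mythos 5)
Your proof is correct and follows essentially the same route as the paper: pull back to the torus cover, show $\pi^*L$ is trivial, identify $g$ with a constant non--degenerate form on $\KC^m$, and read off that the affine deck transformations must lie in $\COAff_m(\KC)$. The only (harmless) difference is that you get triviality of $\pi^*L$ directly on $T$ -- a degree-zero line bundle on a torus with a nonzero section is trivial -- whereas the paper first passes to a further {\'e}tale cover; your shortcut is valid and slightly cleaner.
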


\begin{proof}
  $1.) \Rightarrow 2.)$ is true for any manifold, $3.) \Rightarrow 1.)$ is \Prop{TorHyp}. We prove  $2.) \Rightarrow 3.)$.

By assumption, $M$ is the quotient of $\KC^m$ by some subgroup of $\Aff_m(\KC)$ which contains a complete lattice $\Lambda$ as a normal subgroup of finite index. The lattice defines a torus $T$ and a finite {\'e}tale Galois covering $\pi: T \lra M$. Let $g \in H^0(M, S^2\Omega_M^1 \otimes L)$ be the holomorphic conformal structure. Then  $g_T = \pi^*g \in H^0(T, S^2\Omega_T^1 \otimes \pi^* L)$ is non degenerate. By \Formel{LK} $\pi^*L$ is torsion on $T$, because $K_T$ is trivial. After some finite {\'e}tale covering $T'\lra T$ to another torus $T'$, $L$ becomes trivial. Another covering is Galois, so we may assume that already $\pi^*L = \O_T$. 

Then $L$ is induced by some representation $\rho: \pi_1(M) \lra \KC^*$, which is trivial on $\Lambda$. We may choose coordinates such that $g_T$ is induced by $dz_1^2 + \cdots +dz_m^2$ on $\KC^m$. Then $g_T$ descends to a section of $S^2\Omega_M^1 \otimes L$ if and only of $\sigma_{A,b}^*\omega_T = \rho(\sigma_{A,b})\omega_T$. This is the case if and only if $\sigma_{A,b} \in \COAff_m(\KC)$.
\end{proof}

\begin{example}
1.) Let $E,F$ be elliptic curves. Let $U \subset E$ be some nontrivial subgroup and $\rho: U \lra Aut(F)$ be injective. The group $U$ acts on $E \times F$ without fixed points via $(z_1, z_2) \mapsto (z_1 + u, \rho(u)(z_2))$. The quotient $M = E \times F/U$ is a hyperelliptic surface with a holomorphic conformal structure induced by $dz_1dz_2$.

2.)  Let $E_1, E_2$ be elliptic curves, $F = \KC/\KZ + \KZ\xi$ where $\xi$ is some third root of unity. Let $e_6\in E_1[6]$ be a point of order $6$. Consider the fixed point free action of $\KZ_6$ on $T = E_1 \times E_2 \times F$:
   \[(z_1, z_2, z_3) \mapsto (z_1 + e_6, -z_2, \xi z_3).\]
It is easy to see that every eigenvector of the induced action on $H^0(T, S^2\Omega_T^1)$ is degenerate. The quotient $M =  E_1 \times E_2 \times F/\KZ_6$ is a hyperelliptic manifold that does not admit a quadric strucutre.
\end{example}

\section{Classification} \label{birklass} 
\setcounter{equation}{0}

The Theorem by Kobayashi and Ochiai from the introduction says a compact K\"ahler--Einstein manifold $M_m$ carries a holomorphic conformal structure if and only if up to isomorphisms $M$ is either
  \begin{enumerate}
    \item $\Q_m$,
    \item a torus or a conformally hyperelliptic manifold or
    \item a $\D_m$ quotient.
  \end{enumerate}
Recall that K\"ahler--Einstein implies $\pm c_1(M) > 0$ or $c_1(M) = 0$. Conversely, by results of Aubin \cite{Au} and Yau \cite{Yau}, if $c_1(M) < 0$ or $c_1(M)=0$, then $M$ admits a K\"ahler--Einstein metric.

\subsection{Rational curves} Let $M_m$ be a projective manifold with a quadric structure. If $K_M$ is not nef, then $M$ contains a rational curve by the cone theorem (\cite{KM}) meaning that there exists a non--constant holomorphic map
  \[\nu: \PN_1(\KC) \lra M.\]

\begin{proposition} \label{ratcurve}
  Let $M_m$ be a projective manifold with a quadric structure. If $M$ contains a rational curve then $M$ is uniruled and either
\begin{enumerate} 
 \item $M \simeq Q_m$ or
 \item $M$ is ruled surface over a compact Riemann surface $B$ of genus $g_B > 0$ induced by some representation $\rho: \pi_1(B) \lra PGl_2(\KC)$.
\end{enumerate}
\end{proposition}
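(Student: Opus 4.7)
My approach exploits the development $\psi: \tilde{M} \to \Q_m$ (a local biholomorphism since it is an immersion between equidimensional complex manifolds) together with the holonomy representation $\rho: \pi_1(M) \to \HAut(\Q_m) = PSO(m+2,\KC)$. Given a rational curve $\nu: \PN_1(\KC) \to M$, I would lift it through the universal cover to $\tilde\nu: \PN_1(\KC) \to \tilde{M}$ (using simple-connectedness of $\PN_1(\KC)$) and compose with $\psi$ to obtain a rational curve $\psi \circ \tilde\nu$ in $\Q_m$. Rational curves on the hyperquadric are very explicit: the minimal ones are lines, an $(m-2)$-parameter family of which passes through each point and which collectively sweep out $\Q_m$. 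After a bend-and-break reduction to a rational curve of minimal $(-K_M)$-degree through a generic point, I may assume $\psi \circ \tilde\nu$ maps birationally onto a line of $\Q_m$. Since $\psi$ is étale, local deformations of the image line in $\Q_m$ pull back to deformations of $\tilde\nu$ in $\tilde{M}$, and hence of $\nu$ in $M$, which already gives that $M$ is uniruled.

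For $m \geq 3$ the quadric $\Q_m$ is Fano with Picard number one. Here I would globalise the local lifting: a line $\ell \subset \Q_m$ is simply connected, so each local lift of $\ell$ to $\tilde{M}$ through a point $\tilde{x} \in \psi^{-1}(\ell)$ extends by analytic continuation to an embedded $\PN_1(\KC) \hookrightarrow \tilde{M}$ that $\psi$ sends biholomorphically onto $\ell$. Letting $\ell$ vary over the lines through $\psi(\tilde{x})$ produces an $(m-2)$-parameter family of lifted lines through $\tilde{x}$ on which $\psi$ restricts to a bijection with the family on $\Q_m$. The set of points of $\tilde{M}$ reached from $\tilde{x}$ by chains of such lifted lines is both open and closed (by the same local lifting argument applied at each point), hence equal to $\tilde{M}$, so $\psi$ is surjective. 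A surjective local biholomorphism between compact connected complex manifolds of equal dimension is an \'etale cover, and since $\Q_m$ is simply connected for $m \geq 3$, $\psi$ is a biholomorphism; then $\pi_1(M)$ acts trivially on $\tilde{M}$ and $M \simeq \Q_m$, giving case (1).

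For $m = 2$, $\Q_2 \simeq \PN_1(\KC) \times \PN_1(\KC)$ carries two transverse rulings by lines. The same lifting argument yields on $\tilde{M}$ two transverse foliations by rational curves, which descend to $M$ (possibly after an \'etale double cover that accounts for whether $\rho$ preserves or exchanges the two rulings). Collapsing one foliation by Stein factorisation produces a $\PN_1(\KC)$-bundle $p: M \to B$ over a smooth compact Riemann surface $B$; if both rulings descend to bundles with base $\PN_1(\KC)$ one recovers $M \simeq \Q_2$ (case (1)), otherwise $g_B > 0$ and $M$ is a non-trivial ruled surface. The second foliation then endows $M$ with a flat transverse $\PN_1(\KC)$-structure along the fibres of $p$, and its holonomy is precisely the desired representation $\rho: \pi_1(B) \to \Aut(\PN_1(\KC)) = PGl_2(\KC)$ of case (2), so that $M = F \times_\rho \tilde{B}$ with $F = \PN_1(\KC)$. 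The main obstacle I anticipate is the completeness step in dimension $\geq 3$, namely showing that local lifts of lines in $\Q_m$ glue consistently and exhaust $\tilde{M}$ so that $\psi$ is surjective; everything else reduces to standard quadric geometry and classical ruled-surface theory.
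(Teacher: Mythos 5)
Your strategy founders on exactly the point you flag at the end: the developing map $\psi\colon\tilde M\to\Q_m$ is only a local biholomorphism (an immersion between equidimensional manifolds), not a covering map onto its image, so it has no path--lifting property. A local lift of a line $\ell\subset\Q_m$ through $\tilde x$ cannot in general be continued analytically over all of $\ell$ --- consider $\psi$ restricted to a small coordinate ball --- and consequently neither the claimed embedded lifts of lines, nor the open--and--closed chain argument, nor the surjectivity of $\psi$ follow. Establishing such completeness is precisely the hard content of this kind of uniformization statement; the paper does not attempt it, but instead quotes the theorem of Hwang and Mok \cite{HM} that a uniruled projective manifold with an irreducible reductive $G$--structure (which covers all $m\neq 2$) is biholomorphic to the model, settling case (1) in one stroke. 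A second gap: you assert that after bend--and--break the rational curve develops birationally onto a line of $\Q_m$. Bend--and--break controls the $(-K_M)$--degree of curves in $M$, not the image under $\psi$ of their lifts, so there is no reason that image is a line; moreover the reduction to a minimal--degree curve through a general point already presupposes a covering family of rational curves, i.e.\ the uniruledness you are trying to prove. The paper's route to uniruledness avoids all of this: $T_{\Q_m}$ is globally generated by homogeneity, hence $\nu^*T_M=(\psi\circ\tilde\nu)^*T_{\Q_m}$ is nef, so $\nu$ is a free rational curve and $M$ is uniruled by \cite{KollBuch}, IV.1.9.

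For $m=2$ your two--foliations picture rests on the same unproved global lifting of lines, and the final step (that the second foliation gives a \emph{flat} $\PN_1(\KC)$--bundle) is asserted rather than derived. The paper argues instead by surface classification: nefness of $T_M$ on every rational curve excludes $(-1)$--curves, so $M$ is a minimal uniruled surface; $\PN_2$ is ruled out by the Chern class relation \Formel{C2}; and for a ruled surface $\pi\colon M\to B$ the conformal structure is automatically compatible with $0\to\pi^*K_B\to\Omega^1_M\to K_{M/B}\to 0$ because $\Hom(\pi^*K_B,\pi^*T_B\otimes K_M)=0$, whence \Lem{CSeq} forces this sequence to split holomorphically. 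The splitting is what produces the connection on the $\PN_1$--bundle and hence the representation $\pi_1(B)\to PGl_2(\KC)$; you would need some substitute for this argument to justify the flat structure you claim in case (2).
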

In the second point, $M = \PN_1(\KC) \times \tilde{B}/\pi_1(B)$, where $\gamma \in \pi_1(B)$ acts via $(\tau, b) \mapsto (\rho(\gamma)(\tau), \gamma(b))$.

\begin{corollary} \label{ratcurveC}
  Let $M_m$ be a projective manifold with a quadric structure. If $M$ does not contain a rational curve, then $K_M$ is nef. 
\end{corollary}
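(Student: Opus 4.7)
The statement is logically the contrapositive of the implication ``$K_M$ not nef $\Rightarrow$ $M$ contains a rational curve,'' which is precisely the consequence of the cone theorem that was invoked just before \Prop{ratcurve}. So my plan is to deduce the corollary by contraposition from that cone-theoretic fact, and then cross-check it against \Prop{ratcurve} to make sure no additional hypothesis is hidden.

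Concretely, the plan is as follows. First, I would suppose toward contradiction that $K_M$ is not nef. By the cone theorem of Mori (in the formulation of \cite{KM} used in the paragraph preceding \Prop{ratcurve}), on a projective manifold a non-nef canonical bundle produces an extremal ray of the Mori cone $\overline{NE}(M)$ on which $K_M$ is negative, and every such extremal ray is spanned by the class of a rational curve. Hence there exists a non-constant holomorphic map $\nu: \PN_1(\KC) \lra M$, contradicting the hypothesis of the corollary. Note that this step does not use the quadric structure at all; it is a general fact about projective manifolds.

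As a sanity check, I would note that the conclusion is also consistent with \Prop{ratcurve}: if $M$ did contain a rational curve, \Prop{ratcurve} forces $M$ to be either $\Q_m$ or the described ruled surface over a curve of positive genus, and in both cases $K_M$ is visibly not nef (on $\Q_m$, $-K_M$ is ample, and a ruled surface has fibers on which $K_M$ restricts to $\O_{\PN_1}(-2)$). So invoking \Prop{ratcurve} would give an alternative, slightly stronger route, but the cone-theoretic argument is cleaner.

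There is essentially no technical obstacle here; the only subtlety is that one needs $M$ to be projective (not merely compact K\"ahler) in order to apply the cone theorem, but this is explicitly part of the hypotheses. Hence the corollary is immediate.
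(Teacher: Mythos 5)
Your argument is correct and is exactly the paper's: the corollary is stated there as an immediate consequence of the cone theorem (the paragraph before Proposition~\ref{ratcurve} already records that a non-nef $K_M$ forces a rational curve), so the contrapositive gives the claim. Your observation that the quadric structure plays no role here, and the consistency check against Proposition~\ref{ratcurve}, are both fine but not needed.
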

Recall that $K_M$ nef means $K_M.C\ge 0$ for any curve $C$ in $M$. The corollary is an immediate consequence of the proposition and the cone theorem (\cite{KM}).

\begin{proof}[Proof of Proposition~\ref{ratcurve}.] Let $\psi: \tilde{M} \lra Q_m$ be a development of the universal covering space $\mu: \tilde{M} \lra M$. We have an induced map $\tilde{\nu}: \PN_1(\KC) \lra \tilde{M}$ such that $\mu \circ \tilde{\nu} = \nu$. Homogeneity implies $T_{Q_n}$ is spanned by global sections. Then $\nu^*T_M = (\psi \circ \tilde{\nu})^*T_{Q_m}$ is nef, i.e., $\nu^*T_M = \O_{\PN_1}(a_1) \oplus \cdots \oplus \O_{\PN_1}(a_m)$ where $a_k \ge 0$ for all $k$. Then $M$ contains a free rational curve. Then $M$ is uniruled, i.e., covered by rational curves, by \cite{KollBuch}, IV, 1.9.Theorem.

In the case $m \not= 2$ the quadric $\Q_m$ is irreducible as a hermitian symmetric space. Then $M$ uniruled implies $M \simeq Q_m$ by \cite{HM}. 

In the case $m = 2$, $M$ is a projective surface. As $T_M$ is nef on every rational curve, $M$ is free of $(-1)$--curves. The list of minimal uniruled projective surfaces is $\PN_2$ or a ruled surface over some smooth base curve $B$. 

As $9 = c_1(\PN_2)^2 = 3c_2(\PN_2)$ implying $c_2(\PN_2) \not= \frac{1}{2}c_1(\PN_2)^2$ we infer $M \not= \PN_2$. On a ruled surface $\pi: M \lra B$ we have the exact sequence
  \begin{equation} \label{seq0}
    0 \lra \pi^*K_B \lra \Omega_M^1 \lra K_{M/B} \lra 0.
  \end{equation}
The section $g$ is necessarily compatible with this extension, since $\Hom(\pi^*K_B, \pi^*T_B\otimes K_M) = H^0(B, T_B^{\otimes 2} \otimes \pi_*K_M) = 0$. By \Lem{CSeq} the sequence splits holomorphically. This means the fiber bundle $M \lra B$ has a connection. Then $M$ is induced by a representation of the fundamental group $\pi_1(B) \lra Aut(\PN_1) = PGl_2(\KC)$.
\end{proof}

\subsection{Large fundamental group}
Recall from  (\cite{Ko}, 1.7.) that $M$ is said to have {\em large fundamental group} if for any irreducible complex subvariety $W \subset M$ of positive dimension
  \[{\rm Im}[\pi_1(W_{norm}) \lra \pi_1(M)] \quad \mbox{is infinite.}\]
Here $W_{norm}$ denotes the normalization of $W$.

\begin{proposition} \label{lfg}
  A projective manifold $M$ with a quadric structure which is not uniruled has large fundamental group.
\end{proposition}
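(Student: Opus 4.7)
The plan is to argue by contradiction. Assume there is an irreducible subvariety $W \subset M$ of positive dimension such that $H := \mathrm{Im}[\pi_1(W_{\mathrm{norm}}) \to \pi_1(M)]$ is a finite group. Let $W' \to W_{\mathrm{norm}}$ be the finite \'etale cover corresponding to the kernel of $\pi_1(W_{\mathrm{norm}}) \twoheadrightarrow H$; then $\pi_1(W') \to \pi_1(M)$ is the trivial homomorphism, so the composition $W' \to W_{\mathrm{norm}} \to W \hookrightarrow M$ lifts along the universal cover $\mu : \tilde{M} \to M$ to a holomorphic map $f : W' \to \tilde{M}$. Here $W'$ is a compact (normal) projective variety of positive dimension.

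Next I would apply a developing map $\psi : \tilde{M} \to \Q_m$. Since $\psi$ is immersive between complex manifolds of equal dimension $m$, it is a local biholomorphism. Consequently $\phi := \psi \circ f : W' \to \Q_m$ is a holomorphic map whose image $V := \phi(W')$ is a compact analytic, hence by Chow projective, subvariety of positive dimension. The contradiction is to come from non-uniruledness: by \Prop{ratcurve}, $M$ contains no rational curve, so neither does $W'$. If one can exhibit a rational curve $C \subset V$, then $\psi^{-1}(C) \subset \tilde{M}$ is a smooth curve on which $\psi$ restricts to an \'etale cover of $C$, and any irreducible component of $\psi^{-1}(C)$ meeting $f(W')$ is automatically contained in $f(W')$ (by the local-biholomorphism property) and is rational; its image under $\mu$ is then a rational curve inside $W \subset M$, giving the desired contradiction.

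The main obstacle is precisely the production of a rational curve inside $V \subset \Q_m$. A general positive-dimensional projective subvariety of $\Q_m$ need not contain any rational curve (for instance, an elliptic curve embedded in $\Q_2 \simeq \PN_1 \times \PN_1$), so the argument has to use the extra structure available: $V$ arises as the developed image of a compact non-uniruled variety, and the map $\psi$ is equivariant with respect to the holonomy representation $\rho : \pi_1(M) \to PSO(m+2, \KC)$, i.e.\ $\psi(\gamma \cdot p) = \rho(\gamma) \psi(p)$. In the base case $\dim W = 1$, $\phi : W' \to V$ is an unramified morphism of smooth projective curves with $g(W') \ge 1$, so Riemann--Hurwitz forces $g(V) \ge g(W') \ge 1$, in particular $V$ is a non-rational curve; ruling this out should exploit the infinitely many translates $\rho(\gamma) V \subset \Q_m$, $\gamma \in \pi_1(M)/H$, which must fit consistently with the compact piece $f(W') \subset \tilde M$ and the fact that $\pi_1(M)$ is infinite (since $M \ne \Q_m$). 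For $\dim W \ge 2$, I expect an induction on dimension using the $(m-2)$-dimensional family of lines of $\Q_m$ through a general point of $V$ to reduce to the curve case.
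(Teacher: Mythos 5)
Your reduction to a simply-connected compact piece $W'$ mapping to $\tilde M$ and then to $\Q_m$ via the development is exactly how the paper starts, but from there you head in a direction that does not close: you try to produce a rational curve inside the developed image $V=\phi(W')\subset\Q_m$, and as you yourself note, there is no reason such a curve should exist (an elliptic curve in $\Q_2$ already defeats it). Neither the appeal to the translates $\rho(\gamma)V$ nor the proposed induction via lines of $\Q_m$ is carried out, so the argument has a genuine gap at its central step. (A secondary problem: even if $V$ contained a rational curve $C$, the claim that a component of $\psi^{-1}(C)$ meeting $f(W')$ must lie \emph{inside} $f(W')$ does not follow from $\psi$ being a local biholomorphism.)

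The idea you are missing is numerical, not geometric: one never needs a rational curve in the image, only the positivity of $-K_{\Q_m}$. First cut $W_{\mathrm{norm}}$ down to a general complete-intersection curve $C_{\mathrm{norm}}$; its $\pi_1$-image in $\pi_1(M)$ is still finite, so a finite \'etale cover $C'\to C_{\mathrm{norm}}$ lifts to $\tilde M$, giving $\psi_1:C'\to\Q_m$. Since the development is a local biholomorphism, $\mu^*T_M=\psi_1^*T_{\Q_m}$ on $C'$, hence $\deg\mu^*(-K_M)=\deg\psi_1^*(-K_{\Q_m})>0$ because $-K_{\Q_m}=\O_{\Q_m}(m)$ is ample and $\psi_1$ is non-constant. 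On the other hand, $M$ not uniruled implies (via \Prop{ratcurve} and \Cor{ratcurveC}) that $K_M$ is nef, so $\deg\mu^*(-K_M)\le 0$ --- a contradiction. This degree comparison is the whole proof; I would redo your write-up around it.
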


\begin{proof} If $M$ is not uniruled $K_M$ is nef (\Cor{ratcurveC}).
Assume to the contrary ${\rm Im}[\pi_1(W_{norm}) \to \pi_1(M)]$ is finite for some $W$ as above. Denote the normalization map by $\nu: W_{norm} \to W$.

Let $C_{norm}$ be some general curve in $W_{norm}$, i.e., the intersection of $\dim W - 1$ general hyperplane sections. Think of $C_{norm}$ as the normalization of $C := \nu(C_{norm}) \subset W$. We have
  \[\pi_1(C_{norm}) \lra \pi_1(W_{norm}) \lra \pi_1(M).\]
Then ${\rm Im}[\pi_1(C_{norm}) \lra \pi_1(M)]$ is finite. The kernel of $\pi_1(C_{norm}) \lra \pi_1(M)$ induces a finite {\'e}tale covering $C' \lra C_{norm}$ from a compact Riemann surface $C'$, such that $\mu: C' \lra C \subset M$ factors over $\tilde{M}$, the universal covering space of $M$. Let $\psi: \tilde{M} \lra  Q_m$ be a development. Denote the induced map $C' \lra \tilde{M} \lra Q_m$ by $\psi_1$. Then $\mu^*T_M = \psi_1^*T_{Q_m}$, implying $\deg(\mu^*(-K_M)) = \deg \psi_1^*(-K_{Q_m}) > 0$. This contradicts $K_M$ nef.
\end{proof}

\subsection{Minimal case}
Recall the following basic facts from the minimal model program: let $M$ be a {\em minimal} projective manifold, i.e., assume $K_M$ nef. The {\em abundance conjecture} asserts that $|dK_M|$ is spanned for some $d \gg 0$, defining the {\em Iitaka fibration}
  \[\Phi: \; M \lra Y,\]
onto some normal projective $Y$ of dimension $\dim Y = \kappa(M)$. Here $\kappa$ denotes the {\em Kodaira dimension}, $\Phi$ has connected fibers. The abundance conjecture is known to hold true for $\dim M \le 3$ and in the case $K_M$ big (meaning $K_M^{\dim M}>0$) by the base point free theorem. It is open in higher dimensions.

If $M$ does not contain any rational curve, then $M$ is minimal by the cone theorem (\cite{KM}). Moreover, in that case any rational map
  \[\xymatrix{M' \ar@{..>}[r] & M}\]
from a manifold $M'$ must be holomorphic. Indeed, if we first had to blow up $M'$ in order to make it holomorphic, then we would find a rational curve in $M$. A manifold with this property is sometimes called {\em strongly minimal} or {\em absolutely minimal} (\cite{MoriClass}, \S 9).

\begin{theorem} \label{ClassAbSc}
Let $M$ be a projective manifold with a quadric structure. Assume that $M$ does not contain a rational curve. Then there exists a finite \'etale covering $M' \lra M$, such that $M'$ is an abelian group scheme over some projective manifold $N'$ such that $K_{N'}$ is ample.
\[\xymatrix{M' \ar[r] \ar[d] & M\\
N' & }\]
\end{theorem}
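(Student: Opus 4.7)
The plan is to construct $M' \lra N'$ as the Iitaka fibration of a suitable finite {\'e}tale cover of $M$, using the K\"ahler--Einstein classification of the introduction to identify the fibers as tori and the Iitaka relation to produce ampleness of $K_{N'}$. Three facts are at our disposal: $K_M$ is nef (\Cor{ratcurveC}), $M$ has large fundamental group (\Prop{lfg}), and the Chern class rigidity of \Prop{AtCl}, which essentially forces every Chern class of $M$ to be proportional to a power of $c_1(M)$, as it would be on the quadric itself.

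The first step is to establish abundance for $K_M$, i.e.\ semi-ampleness. This is the main technical obstacle, since abundance is open in general above dimension three. In our setting, however, the Chern class identities from \Prop{AtCl}, notably \Formel{C2} and its higher analogues, force a very restricted numerical profile on $K_M$, and combining this with Kawamata's partial abundance results for small numerical dimension should yield semi-ampleness. Let $\Phi: M \lra N$ be the resulting Iitaka fibration. It is holomorphic because $M$ is absolutely minimal (any rational map out of $M$ is regular, as noted in the setup to the minimal case), and we may write $K_M \equiv \Phi^*H$ with $H$ an ample $\KQ$-divisor on $N$.

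Next I would analyze the fibers. If $\kappa(M) = m$, then $\Phi$ is birational and must be an isomorphism (no exceptional divisor can exist in $M$ since $M$ contains no rational curve), so $K_M$ is ample, $M$ is K\"ahler--Einstein by Aubin--Yau, and the classification in the introduction identifies $M$ with a $\D_m$-quotient; take $M' = N' = M$. If $\kappa(M) < m$, the general fiber $F$ of $\Phi$ has $K_F \equiv 0$, and the compatibility criterion \Lem{CSeq} applied to the relative differentials sequence
 \[0 \lra \Phi^*\Omega_N^1 \lra \Omega_M^1 \lra \Omega_{M/N}^1 \lra 0\]
shows that $g$ restricts to a non-degenerate holomorphic conformal structure on $F$. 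Yau's theorem then makes $F$ Ricci-flat K\"ahler--Einstein, and the K\"ahler--Einstein classification identifies $F$ with a torus or a conformally hyperelliptic manifold. The case $\kappa(M) = 0$ is subsumed here with $F = M$ and $N'$ a point.

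Finally, I would exploit the large fundamental group to pass from {\'e}tale quotients of tori to tori and to produce the group scheme structure. Since $\pi_1(M)$ is large, the image of $\pi_1(F)$ in $\pi_1(M)$ is infinite, which severely constrains the global monodromy of the fibration. A finite {\'e}tale cover $M' \lra M$, followed by Stein factorization, yields $M' \lra N'$ with genuine torus fibers; a further finite cover produces a zero section and makes $M' \lra N'$ an abelian group scheme. Ampleness of $K_{N'}$ then follows by pulling $K_M \equiv \Phi^*H$ back to $M'$ and using that the group scheme has trivial relative canonical bundle, which gives $K_{M'} \equiv (\Phi')^* K_{N'}$ with $K_{N'}$ ample. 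The whole argument rests on the abundance step; everything after it is largely mechanical given the K\"ahler--Einstein classification and the Chern class rigidity.
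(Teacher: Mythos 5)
Your skeleton matches the paper's (abundance $\Rightarrow$ Iitaka fibration $\Rightarrow$ torus fibers $\Rightarrow$ Koll\'ar-type \'etale cover to a group scheme $\Rightarrow$ $K_{N'}$ ample), but the three load-bearing steps are not actually carried out, and one of them is argued by a false claim. The most serious gap is the one you flag yourself: abundance. Your proposed mechanism --- Chern class identities from \Prop{AtCl} plus ``Kawamata's partial abundance for small numerical dimension'' --- does not exist as an argument: the identities such as \Formel{C2} do not bound the numerical dimension of $K_M$, and abundance for nef $K_M$ is only known for $\nu(K_M)\in\{0,\dim M\}$ (and low dimension) in general. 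The paper gets abundance from a completely different source: the quadric structure produces a nontrivial representation $\pi_1(M)\to PSO(m+2,\KC)$, Zuo's and Kawamata's results then give a nontrivial map to a variety of general type, and Lai's theorem on varieties fibered by good minimal models yields abundance. Without this (or an equivalent) input, your proof does not start.

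Two further steps are wrong or incomplete as written. First, the general fiber $F$ of the Iitaka fibration does \emph{not} inherit a non-degenerate conformal structure by restriction: in the examples that actually occur the fibers are isotropic for $g$ (this is exactly what the later analysis shows, cf.\ the injection $\varphi:f^*\Omega_N^1\to f^*E^{1,0*}\otimes L^*$ in \Formel{seqs3} and \Lem{halb}, which force $m=2n$ and the fibers to be Lagrangian), and \Lem{CSeq} says nothing about restriction to fibers. The correct route, used in the paper, is: $K_F$ is torsion, pass to a cover with trivial canonical class, apply Beauville's decomposition, and kill the simply connected factor using the large fundamental group (\Prop{lfg}). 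Second, your ampleness argument for $K_{N'}$ assumes $K_{M'/N'}$ is trivial; for an abelian group scheme one only has $K_{M'/N'}=(\Phi')^*\det E^{1,0}$, which is nef but not known to vanish at this stage (its vanishing is proved only much later, using the conformal structure). Your pullback computation therefore only gives that $K_{N'}+\det E^{1,0}$ is ample. The paper instead shows $K_{Y'}$ is nef (no rational curves), uses Koll\'ar's $\kappa(M')=\kappa(Y')$ together with $\nu(K_{Y'})\ge\kappa(Y')=\dim Y'$ to get bigness, and then rules out positive-dimensional fibers of the resulting birational Iitaka map via Kawamata's theorem on lengths of extremal rational curves. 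You also omit the remark that Koll\'ar's theorem only gives a \emph{birational} abelian group scheme model, and that absolute minimality of $M'$ is what upgrades the birational map to an isomorphism.
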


Here $M'$ an abelian group scheme over $N'$ means there exists a smooth surjective holomorphic map $f: \; M'\lra N'$ such that every fiber is an abelian variety. Moreover, $f$ admits a section.

\begin{proof}
The proof is essentially as in the case of a flat projective structure (\cite{JRpcgen}, \S 3), we therefore only scetch the idea.
  
  Since $M$ does not contain any rational curve, $K_M$ is nef. The quadric structure gives a non--trivial representation of $\pi_1(M)$. We obtain a non--trivial rational map $\xymatrix{M \ar@{..>}[r] & Y}$ with $Y$ of general type (\cite{Zuo}, \cite{Kaw85}). Using \cite{Lai} we conlude $K_M$ abundant.
  
Let $\Phi: M \lra Y$ be the Iitaka fibration with general fiber $F$. Then $N_{F/M}\simeq \O_F^{\oplus \dim M - \dim Y}$, implying $K_F = K_M|_F + \det N_{F/M} = K_M|_F$ is torsion in $\Pic(F)$. It induces a finite {\'e}tale covering $\tilde{F} \to F$ such that $K_{\tilde{F}}$ is trivial. 

By Beauville's decomposition theorem (\cite{Bo}, \cite{Be}) we have, perhaps after another \'etale covering, 
   \[\tilde{F} \simeq A_y \times B_y\]
where $A_y$ is abelian and $B_y$ is simply connected. By \Prop{lfg}, $\pi_1(B_y) = \{id\}$ implies $B_y$ is a point. Therefore, $F$ is an \'etale quotient of an abelian variety.

By (\cite{Ko}, 6.3.~Theorem), there exists a finite {\'e}tale covering $M' \lra M$, s.t. $M'$ is {\em birational} to an abelian group scheme over some base $Y'$ obtained as Stein factorization of $M' \lra M \lra Y$. 

Since $M'$ cannot contain a rational curve, it is strongly minimal and the birational map must be an isomorphism. Then $M' \lra Y'$ is an abelian group scheme. The map has a section. Then $Y'$ does not contain a rational curve. Then $K_{Y'}$ is nef.

By \cite{Ko}, Proposition~5.9., $\kappa(M') = \kappa(Y')$. For any line bundle $L$ on a projective manifold, $\kappa(L) \le \nu(L) := \min\{k \mid L^k \equiv 0\}$. Then $\nu(K_{Y'}) \ge \kappa(Y') = \kappa(M') = \dim Y'$ implies $K_{Y'}$ big. By the base point free theorem (\cite{KM}), $|dK_{Y'}|$ is spanned for some $d \gg 0$, inducing the Iitaka fibration 
$Y' \lra Z$ which is birational in this case. Fibers of positive dimension are covered by rational curves by \cite{Kaw}, Theorem~2. Then $Y' \lra Z$ is finite and $K_{Y'}$ is ample.
\end{proof}

\section{Abelian group schemes}
\setcounter{equation}{0}

Because of \Theo{ClassAbSc} we now consider the following situation: $f: M \lra N$ is an abelian group scheme, $K_N$ is ample. We exclude the possibility of $M$ being K\"ahler--Einstein by assuming
  \begin{equation} \label{dimassum}
     0 < n=\dim_{\KC} N < m=\dim_{\KC} M.
  \end{equation}
The bundle of relative $1$--forms $\Omega_{M/N}^1$ is the pullback of the rank $m-n$ bundle $E^{1,0} = f_*\Omega_{M/N}^1$, $E^{0,1}=R^1f_*\O_M \simeq E^{1,0*}$. We have the exact sequence
  \[0 \lra f^*\Omega_N^1 \lra \Omega^1_M \lra f^*E^{1,0} \simeq \Omega_{M/N}^1 \lra 0.\]
When restricted to $N$ this is the dualized tangent sequence of $N$ and $E^{1,0} = N_{N/M}^*$ under this identification. The connecting morphism 
  \[f_*f^*E^{1,0} \simeq E^{1,0} \lra  R^1f_*f^*\Omega_N^1 \simeq \Omega_N^1 \otimes R^1f_*\O_M \simeq \Omega_N^1 \otimes E^{0,1}\]
is the Kodaira Spencer map. We want to prove:

\begin{proposition} \label{main}
 Let $M \lra N$ be as above. Then $M$ has a holomorphic conformal structure if and only if
$M$ is a complex surface and
   \[M \simeq F \times_{\rho} C\]
 where $F$ is an elliptic curve, $C$ is a curve of positive genus and $\rho: \pi_1(C) \lra Aut(F)$ is some representation.
\end{proposition}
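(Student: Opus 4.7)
The plan is to handle the two directions separately. For the ``if'' direction I would give a direct construction: on $\tilde{F}\times\tilde{C} = \KC\times\tilde{C}$ take the symmetric tensor $g_0 = dz\otimes dw + dw\otimes dz$ with $z$ a translation-invariant coordinate on the elliptic cover and $w$ a local coordinate on $\tilde{C}$. The tensor $g_0$ is $\pi_1(F)$-invariant, and under $\gamma\in\pi_1(C)$ acting diagonally via $(\rho(\gamma),\gamma|_{\tilde C})$ it picks up a holomorphic $1$-cocycle $\chi(\gamma)$, which defines a line bundle $L$ on $M$ to which $g_0$ descends as a non-degenerate $g\in H^0(M,S^2\Omega_M^1\otimes L)$.

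For the ``only if'' direction, assume $g\in H^0(M,S^2\Omega_M^1\otimes L)$ is a conformal structure with $-2K_M=mL$. The plan is a double reduction: first force $m-n=1$ (so the fibers of $f$ are elliptic), then force $n=1$. Via the isomorphism $g: T_M\simeq\Omega_M^1\otimes L$, the relative tangent $T_{M/N}=f^*E^{0,1}$ becomes a rank-$(m-n)$ subbundle of $\Omega_M^1\otimes L$, to be compared with the cotangent subbundle $f^*\Omega_N^1\otimes L$ of rank $n$; their intersection is the image of $\ker\tilde W$ where $\tilde W=g|_{T_{M/N}}\in H^0(M,f^*S^2E^{1,0}\otimes L)$. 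After an \'etale cover which makes $L$ a pullback $f^*L_N$, the relation $mL_N\equiv -2(K_N+\det E^{1,0})$, combined with ampleness of $K_N$ and Fujita's semipositivity of the Hodge line bundle $\det E^{1,0}$, forces $L_N$ to be anti-ample; in particular $H^0(M,L)=0$, so the ``pure-fiber'' block of $g$ (the component in $f^*S^2E^{1,0}\otimes L$) vanishes identically. A determinant/rank computation on the resulting $m\times m$ block matrix of $g$ then shows that whenever $m-n\ge 2$ the form is singular at a generic point, contradicting non-degeneracy; hence $m-n=1$.

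With $m-n=1$ the fibers are elliptic curves, and I still need $n=1$. I would combine the Atiyah-class Chern-class identity \Formel{C2} from \Prop{AtCl} with ampleness of $K_N$ and the Chern classes of the abelian group scheme computed from the split-free piece of the tangent sequence; for $n\ge 2$ these numerical constraints violate the Miyaoka--Yau inequality on $N$, essentially as in the threefold argument of \cite{JRquadr}. Once $m=2$, $n=1$, one has $\bigwedge^2 f^*\Omega_C^1=0$ by rank, and \Lem{CSeq} forces the cotangent sequence $0\to f^*\Omega_C^1\to\Omega_M^1\to f^*E^{1,0}\to 0$ to split holomorphically. The splitting is a holomorphic flat Ehresmann connection for the elliptic fibration $f:M\to C$; together with the section of $f$ this yields isotriviality and the presentation $M\simeq F\times_\rho C$ for some $\rho:\pi_1(C)\to Aut(F)$. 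The principal obstacles are (i) the singularity-of-the-full-form step in the fiber reduction, where the pointwise linear algebra of the block matrix must be combined with the global $H^0$-vanishing, and (ii) the $n\le 1$ step in the base reduction, where the Chern-class identity \Formel{C2} has to be turned into a genuine numerical contradiction against $K_N$ ample. The key inputs throughout are Hodge-theoretic semipositivity of $E^{1,0}$, \Prop{AtCl}, and Miyaoka--Yau.
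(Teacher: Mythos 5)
There is a genuine gap in your ``only if'' direction, at the very first reduction. Granting that the pure--fiber block $g|_{T_{M/N}}$ vanishes (which itself does not follow from $H^0(M,L)=0$: that block is a section of $f^*S^2E^{1,0}\otimes L$, i.e.\ an element of $H^0(N,S^2E^{1,0}\otimes \L)$, whose vanishing needs a semistability--plus--slope argument of the type in \Lem{zero}, not just the vanishing of $H^0(N,\L)$), the conclusion you draw from the block matrix is wrong. If the rank-$(m-n)$ vertical subbundle is isotropic for the non--degenerate form $g$, linear algebra gives $m-n\le \lfloor m/2\rfloor$, i.e.\ $m\le 2n$ --- an isotropic subspace of dimension up to $m/2$ is perfectly compatible with non--degeneracy (the hyperbolic form), so the case $m-n\ge 2$ is \emph{not} excluded at a generic point whenever $m-n\le n$. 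In fact the paper proves the complementary inequality becomes an equality, $m=2n$ (\Lem{halb}), by a Higgs--bundle argument: the cokernel $R$ of $\Omega_N^1\to E^{1,0*}\otimes\L^*$ would be a Higgs subsheaf of positive slope inside the slope--zero semistable Higgs bundle $(E^{1,0}\oplus E^{0,1},\theta)$, forcing $R=0$. So your plan to first force $m-n=1$ and only then attack $n$ cannot work as stated; everything downstream (including the Miyaoka--Yau step, which is anyway only asserted, not argued) rests on it.

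The paper's actual route to $n=1$ is quite different and worth internalizing: restricting the Atiyah--class identity of \Prop{AtCl} to the section and using \Lem{zero} shows $N$ carries a holomorphic projective connection, hence is a ball quotient with $K_N\equiv-\frac{n+1}{2}\L$ (\Lem{ballq}); then the extension class of the cotangent sequence is alternating by \Lem{CSeq} while its Leray image, the Kodaira--Spencer map $\delta$, is symmetric, so $\delta=0$, the family is isotrivial, $\det E^{1,0}\equiv 0$ and $K_N\equiv-n\L$; comparing the two proportionalities forces $n=1$, $m=2$. Your ``if'' direction and your final splitting step (via \Lem{CSeq} and the flat connection) are fine and agree with the paper, but the two reduction steps in the middle need to be replaced by arguments of the above kind.
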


The proof is given below. First:
\begin{lemma}
  $L = f^*\L$ for a unique $\L \in \Pic(N)$. 
\end{lemma}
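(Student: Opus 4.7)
The plan is to show $L$ restricts to the trivial line bundle on every fiber of $f$, and then descend $L$ by standard arguments. First, since $\Omega^1_{M/N} = f^*E^{1,0}$, the relative canonical is $K_{M/N} = \det\Omega^1_{M/N} = f^*\det E^{1,0}$, and combined with $K_M = K_{M/N} + f^*K_N$ this gives $K_M = f^*(K_N + \det E^{1,0})$, pulled back from $N$. The relation $-2K_M \simeq mL$ from \Formel{LK} then yields that $mL$ is also pulled back from $N$. Restricting to a fiber $F = F_y$ gives $m \cdot L|_F = 0$ in $\Pic(F)$, so $L|_F$ is an $m$-torsion line bundle on the abelian variety $F$; since the N\'eron--Severi group of an abelian variety is torsion free, $L|_F \in \Pic^0(F)$.

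The heart of the argument is upgrading $m$-torsion on fibers to triviality. Restricting the relative cotangent sequence to $F$ yields
\[0 \lra f^*\Omega^1_N|_F \lra \Omega^1_M|_F \lra f^*E^{1,0}|_F \lra 0,\]
whose outer terms are {\em trivial} bundles on $F$, being pullbacks along the constant map $F \to \{y\}$. Hence $\Omega^1_M|_F$ is a successive extension of trivial bundles, and so is $S^2\Omega^1_M|_F$. Tensoring the induced three-step filtration with $L|_F$ produces a filtration of $S^2\Omega^1_M|_F \otimes L|_F$ whose graded pieces are direct sums of copies of $L|_F$. If $L|_F$ were non-trivial in $\Pic^0(F)$, then $H^0(F, (L|_F)^{\oplus k}) = 0$ for every $k$ (a non-trivial line bundle in $\Pic^0$ of an abelian variety has no global sections), and the long exact cohomology sequences of the filtration would force $H^0(F, S^2\Omega^1_M|_F \otimes L|_F) = 0$. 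But $g$ induces an isomorphism $T_M \otimes L^* \simeq \Omega^1_M$ which restricts to an isomorphism $T_M|_F \otimes L^*|_F \simeq \Omega^1_M|_F$, so $g|_F$ is a non-zero section of $S^2\Omega^1_M|_F \otimes L|_F$. This contradiction forces $L|_F \simeq \O_F$ for every $y \in N$.

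With $L$ trivial on every fiber of the smooth proper map $f$ with connected fibers, cohomology and base change gives that $\mathcal{L} := f_*L$ is a line bundle on $N$ and the adjunction $f^*\mathcal{L} \to L$ is an isomorphism. Uniqueness of $\mathcal{L}$ is immediate from the projection formula together with $f_*\O_M = \O_N$ (connected fibers). The main obstacle is the passage from $m$-torsion on each fiber to outright triviality; this is resolved by exploiting the unipotence of $\Omega^1_M|_F$ (inherited from $F$ being a fiber of a smooth abelian group scheme) together with the global non-degeneracy of the conformal structure $g$.
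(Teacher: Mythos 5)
Your proof is correct and follows essentially the same route as the paper: both show $L|_F$ is torsion (via $mL\equiv -2K_M$ and triviality of $K_M|_F$), then exploit the fact that $\Omega_M^1|_F$ is an iterated extension of trivial bundles together with a distinguished nonzero section supplied by $g$ to force $L|_F$ to be trivial, and finally descend via $f_*$. The only cosmetic difference is that you extract the section from $g|_F\in H^0(F,S^2\Omega_M^1|_F\otimes L|_F)$ and argue with $\Pic^0$, whereas the paper transports $H^0(F,\Omega_M^1|_F)\neq 0$ through the isomorphism $T_M\otimes L^*\simeq\Omega_M^1$ and uses that a torsion line bundle with a section is trivial.
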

Restricted to the section $N$ of $f$ we have (in $H^1(\Omega^1)$:
   \begin{equation} \label{LLK}
    \L \equiv L|_N \equiv -\frac{2}{m}K_M|_N \equiv -\frac{2}{m}(K_{M/N} + K_N).
   \end{equation}
\begin{proof}
For any fiber $F$ of $M\lra N$ we have $N_{F/M}\simeq \O_F^{\oplus m-n}$ and $\O_F = K_M|_F$ by adjunction. Because of \Formel{LK} $L|_F$ is a torsion line bundle. Consider
  \begin{equation} \label{seqs}
\xymatrix{0 \ar[r] & f^*\Omega_N^1 \ar[r] & \Omega_M^1\ar[d]^{g^{-1}} \ar[r] & f^*E^{1,0} \ar[r] & 0 \\
 0 \ar[r] & f^*E^{1,0*} \otimes L^* \ar[r] & T_M \otimes L^* \ar[r] & f^*T_N \otimes L^* \ar[r] & 0.}
   \end{equation}
Restrict to $F$. The top sequence and $f^*\Omega_N^1|_F \simeq \O_F^n$ shows $H^0(F, \Omega_M^1|_F) \not= 0$. The lower sequence shows $T_M \otimes L^*|_F$ is an extension of $L^{\oplus m-n}$ by $L^{\oplus n}$. Using $H^0(F, T_M|_F \otimes L) \simeq H^0(F, \Omega_M^1|_F) \not= 0$ we find $H^0(F, L|_F) \not= 0$.

A torsion line bundle with a section is trivial, i.e., $L|_F \simeq \O_F$. Then $\L := f_* L \in \Pic(N)$ and $L = f^*\L$. Uniqueness follows from the injectivity of $\Pic(N) \lra \Pic(M)$ ($M\lra N$ has a section). Restricted to $N$ we have $L|_N \simeq \L$.
\end{proof}

\subsection{Slope and stability} These notions depend on the choice of a polarization. We will only study bundles on $N$ and will choose $K_N$ as the polarization.

For any torsion free sheaf $\F$ on $N$ of positive rank, the slope is 
  \[\mu(F) = \frac{\det \F^{**}.K_N^{n-1}}{rk \F}.\]
A bundle $E$ is called {\em (semi-) stable} if for any proper nonzero subsheaf $\F$ of $E$ one has $\mu(\F) <(=) \mu(E)$. More generally a Higgs bundle $(E, \theta)$ is called {\em (semi-) stable} if for any proper nonzero Higgs-subsheaf $\F$ one has $\mu(\F) <(=) \mu(E)$. 

For example, Yau's uniformization theorem, nicely recalled in \cite{VZ2}, says if $K_N$ is ample then $\Omega_N^1$ is polystable, i.e., a direct sum of stable vector bundles of the same slope.

\begin{lemma} \label{zero}
  $\Hom(f^*\Omega_N^1, f^*T_N\otimes L^*) \simeq \Hom(\Omega_N^1, T_N \otimes \L^*) = 0$.
\end{lemma}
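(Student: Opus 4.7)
My plan is to establish the isomorphism via the projection formula and to deduce the vanishing from a slope argument built on polystability. For the first part: since $L = f^*\L$, I will write $\Hom(f^*\Omega_N^1, f^*T_N\otimes L^*) \simeq H^0(M, f^*(T_N\otimes T_N \otimes \L^*))$, then use $f_*\O_M = \O_N$ (connected fibers, as $f$ is an abelian group scheme) to descend to $H^0(N, T_N\otimes T_N \otimes \L^*) = \Hom(\Omega_N^1, T_N\otimes \L^*)$.

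For the vanishing I will invoke Yau's uniformization (as recalled in [VZ2]): $K_N$ ample forces $\Omega_N^1$ to be $K_N$-polystable of slope $\mu_0 := K_N^n/n > 0$, and tensoring with $\L^*$ preserves polystability, so $T_N \otimes \L^*$ is $K_N$-polystable of slope $-\mu_0 - \L\cdot K_N^{n-1}$. A nonzero morphism between $K_N$-semistable sheaves forces $\mu(\text{source}) \leq \mu(\text{target})$. Substituting the numerical identity $\L \equiv -\tfrac{2}{m}(\det E^{1,0}+K_N)$ (from (5.2) together with $K_{M/N} = f^*\det E^{1,0}$), this inequality rearranges to $\mu(E^{1,0}) \geq \mu_0$, where $\mu(E^{1,0}) := \det E^{1,0}\cdot K_N^{n-1}/(m-n)$. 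So the whole question reduces to proving the strict inequality $\mu(E^{1,0}) < \mu(\Omega_N^1)$.

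The main obstacle is this last slope comparison, and I will obtain it from the Higgs bundle of the weight-one VHS. Consider $(E,\theta)$ with $E = E^{1,0}\oplus E^{0,1}$ and $\theta$ the Kodaira-Spencer map. Since $M$ is projective the VHS is polarized, so by Simpson $(E,\theta)$ is $K_N$-polystable of slope $0$. Any subsheaf of $E^{0,1}$ is automatically $\theta$-invariant (as $\theta|_{E^{0,1}} = 0$), so $\mu_{\max}(E^{0,1}) \leq 0$, hence $\mu_{\min}(E^{1,0}) \geq 0$. For the Harder--Narasimhan first piece $F\subset E^{1,0}$ (semistable of slope $\mu_{\max}(E^{1,0})$) I split cases: if $\theta|_F = 0$ then $F$ itself is $\theta$-invariant and $\mu_{\max}(E^{1,0}) \leq 0 < \mu_0$; if $\theta|_F \neq 0$ then $\theta(F)$ is a nonzero quotient of the semistable $F$ sitting in $E^{0,1}\otimes \Omega_N^1$, and semistability of $F$ together with the identity $\mu_{\max}(E^{0,1}\otimes \Omega_N^1) = -\mu_{\min}(E^{1,0}) + \mu_0$ yields $\mu_{\max}(E^{1,0}) \leq -\mu_{\min}(E^{1,0}) + \mu_0 \leq \mu_0$. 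If this inequality is strict then $\mu(E^{1,0}) \leq \mu_{\max}(E^{1,0}) < \mu_0$; if equality holds then $\mu_{\min}(E^{1,0}) = 0$ and $\mu_{\max}(E^{1,0}) = \mu_0$, so $E^{1,0}$ is not semistable and the HN-weighted average $\mu(E^{1,0})$ lies strictly below $\mu_{\max}(E^{1,0}) = \mu_0$.

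The projection-formula step and the polystability reduction are routine; the entire delicate content sits in this Higgs-bundle/Arakelov-type slope comparison, which is where the weight-one Hodge structure carried by the abelian group scheme genuinely enters.
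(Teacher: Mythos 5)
Your proof is correct, and the skeleton coincides with the paper's: projection formula for the isomorphism, (semi)stability of $\Omega_N^1$ via Yau's uniformization, and the slope computation using \Formel{LLK} that reduces everything to the strict inequality $\mu(E^{1,0}) < \mu(\Omega_N^1)$. Where you diverge is in how that inequality is obtained. The paper splits into the cases $\mu(E^{1,0})=0$ (trivial) and $\mu(E^{1,0})>0$, and in the latter case quotes the Arakelov-type inequality $2\mu(E^{1,0})\le\mu(\Omega_N^1)$ from Viehweg--Zuo (\cite{VZ2}, Theorem~1 and Remark~2) as a black box. You instead re-derive a weaker but sufficient bound directly from Simpson's polystability of the weight-one Hodge--Higgs bundle $(E^{1,0}\oplus E^{0,1},\theta)$: the observations that subsheaves of $E^{0,1}$ are automatically $\theta$-invariant (giving $\mu_{\min}(E^{1,0})\ge 0$), that $\theta$ maps the maximal destabilizing piece of $E^{1,0}$ into $E^{0,1}\otimes\Omega_N^1$ (giving $\mu_{\max}(E^{1,0})\le -\mu_{\min}(E^{1,0})+\mu(\Omega_N^1)$ via $\mu_{\max}$ of a tensor product of semistables), and the case analysis at equality are all sound, including the endgame that equality forces $E^{1,0}$ non-semistable so that the average slope drops strictly below $\mu_{\max}$. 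Your route is more self-contained --- it even supplies $\mu(E^{1,0})\ge 0$, which the paper uses without comment --- and it is philosophically consistent with the paper, which deploys exactly this Higgs-subsheaf slope argument later in \Lem{halb}; the paper's citation of \cite{VZ2} buys a sharper constant that is simply not needed here.
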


\begin{proof} The first isomorphism is the projection formula. The bundle $\Omega_N^1$ is semistable. By \cite{KBook}, 7.11., any map $\Omega_N^1 \lra T_N \otimes \L^*$ vanishes identically as soon as $\mu(\Omega_N^1) > \mu(T_N \otimes \L^*)$. 
  \[\mu(T_N \otimes \L^*) = \frac{(-K_N -n\L).K_N^{n-1}}{n} = -\mu(\Omega_N^1)-\L.K_N^{n-1}\]
and \Formel{LLK} gives $\mu(T_N \otimes \L^*) = -\mu(\Omega_N^1)+\frac{2n}{m}\mu(\Omega^1_N) +\frac{2(m-n)}{m}\mu(E^{1,0})$. We therefore have to show 
$\mu(\Omega_N^1) - \mu(T_N \otimes \L^*) = \frac{2(m-n)}{m}(\mu(\Omega_N^1) - \mu(E^{1,0}))>0$. Because of \Formel{dimassum} we have to show $\mu(\Omega_N^1) - \mu(E^{1,0})>0$.

In the case $\mu(E^{1,0}) =0$ this follows from $\mu(\Omega_N^1) > 0$ ($n > 0$). In the case $\mu(E^{1,0}) > 0$ this follows from $2\mu(E^{1,0}) \le \mu(\Omega_N^1)$ (\cite{VZ2}, Theorem~1 and Remark~2). 
\end{proof}

\subsection{Holomorphic projective connections}
The manifold $N_n$ is said to carry a {\em holomorphic (normal) projective connection}, if the normalized Atiyah class of $\Omega_N^1$ has the form
 \[a(\Omega_N^1) = \frac{c_1(K_N)}{n+1} \otimes id + id \otimes \frac{c_1(K_N)}{n+1}\]
 in $H^1(N, \Omega_N^1 \otimes T_N \otimes \Omega_N^1)$, where tensors are identified as before. The following result in the K\"ahler--Einstein case is due to Kobayashi and Ochiai (\cite{KO}):
\begin{theorem} \label{KEPC}
  Let $N_n$ be K\"ahler--Einstein. Then $N$ carries a holomorphic projective connection if and only if $N$ is
  \begin{enumerate}
   \item $\simeq \PN_n(\KC)$ or
   \item a finite {\'e}tale quotient of a torus or
   \item a ball quotient.
  \end{enumerate}
\end{theorem}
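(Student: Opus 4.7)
The theorem is a classical result of Kobayashi--Ochiai, and my plan mirrors the derivation of \Formel{C2} from \Prop{AtCl}. First, I would contract the projective-connection version of the Atiyah class formula
\[a(\Omega_N^1) \;=\; \frac{c_1(K_N)}{n+1}\otimes \id + \id \otimes \frac{c_1(K_N)}{n+1}\]
in the two distinct ways available over the $\End(T_N)$-factors to obtain the Chern class identity
\[n\, c_1(N)^2 \;=\; 2(n+1)\, c_2(N) \quad \text{in } H^2(N, \Omega_N^2),\]
completely analogously to the derivation of \Formel{C2} in the quadric case. This is the hallmark relation satisfied by $\PN_n(\KC)$ and by ball quotients, and coincides with the equality case in the Miyaoka--Yau inequality.

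The K\"ahler--Einstein hypothesis forces $c_1(N)$ to be positive, zero, or negative, so I would split accordingly. If $c_1(N) > 0$, the Chern class identity combined with the Einstein condition pins down the curvature tensor of the K\"ahler--Einstein metric to be that of constant positive holomorphic sectional curvature; classical uniformization then yields $N \simeq \PN_n(\KC)$. If $c_1(N) = 0$, the identity additionally gives $c_2(N) = 0$, and Yau's Ricci-flat metric combined with the Bogomolov--Beauville decomposition, or equivalently Bieberbach's theorem applied to the induced flat structure on $T_N$, shows $N$ is a finite \'etale quotient of a torus. If $c_1(N) < 0$, then $K_N$ is ample and the identity is precisely equality in the Miyaoka--Yau inequality; Yau's characterisation of the equality case identifies the universal cover of $N$ with the unit ball $\BN^n \subset \KC^n$.

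The main obstacle is the first step: honestly tracking the tensor slots in $a(\Omega_N^1) \in H^1(\Omega_N^1 \otimes \End(T_N))$ and performing the two partial traces so as to extract the Chern class identity from the projective-connection formula. Once that identity is in hand, all three cases reduce to applications of well-known deep results (uniformisation for compact K\"ahler manifolds of constant holomorphic sectional curvature, the Bogomolov--Beauville decomposition, and Yau's analysis of the Miyaoka--Yau equality case).
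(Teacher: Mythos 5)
The paper does not prove this theorem at all: it is quoted as a known result of Kobayashi and Ochiai with a citation to \cite{KO}, so there is no internal argument to compare yours against. That said, your outline is essentially the standard proof of the ``only if'' direction and is sound. The contraction of $a(\Omega_N^1)=\frac{c_1(K_N)}{n+1}\otimes\id+\id\otimes\frac{c_1(K_N)}{n+1}$ does yield $2(n+1)c_2(N)=n\,c_1(N)^2$ in $H^2(N,\Omega_N^2)$ (one can sanity-check the coefficient on $\PN_n$: $c_1=(n+1)h$, $c_2=\binom{n+1}{2}h^2$), exactly parallel to the derivation of \Formel{C2}. Pairing with $\omega^{n-2}$ for the K\"ahler--Einstein form then puts you in the equality case of the Chen--Ogiue/Miyaoka--Yau inequality, which for K\"ahler--Einstein metrics of any sign of the Einstein constant forces constant holomorphic sectional curvature; uniformization of complex space forms gives the trichotomy $\PN_n$, flat (hence, by Bieberbach, a finite \'etale torus quotient), or ball quotient. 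Two small points you should make explicit: first, the Chen--Ogiue pointwise positivity of $\bigl(2(n+1)c_2-n c_1^2\bigr)\wedge\omega^{n-2}$ is what converts the cohomological identity into the curvature statement, and it is needed in all three sign cases, not only the $K_N$-ample one; second, the theorem is an equivalence, and the converse direction (that $\PN_n$, torus quotients and ball quotients carry holomorphic projective connections) still needs the one-line observation that all three carry flat projective structures, whose developments produce the required normal projective connection.
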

Here a ball quotient means the universal covering space of $N$ can be identified with $\BN_n(\KC) = \{z \in \KC^{n+1} \mid |z|<1\}$, the non compact dual of $\PN_n(\KC)$ in the sense of hermitian symmetric spaces. Then $\pi_1(N)$ acts like a subgroup of $SU(1,n) \simeq \HAut(\BN_n(\KC))$.

\begin{lemma} \label{ballq}
 $N$ is a ball quotient and $K_N \equiv -\frac{n+1}{2}\L$.
\end{lemma}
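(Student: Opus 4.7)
The plan is to invoke \Theo{KEPC} (Kobayashi--Ochiai), which requires showing that $N$ is K\"ahler--Einstein and carries a holomorphic normal projective connection, and then eliminating the non-ball cases. Since $K_N$ is ample, $N$ is of general type, hence K\"ahler--Einstein by Aubin--Yau. Once the projective connection is in hand, \Theo{KEPC} gives three possibilities for $N$: $\PN_n(\KC)$, a finite \'etale quotient of a torus, or a ball quotient. The first is ruled out by $K_{\PN_n}=-(n+1)H$ being anti-ample, contradicting $K_N$ ample; the second by a torus quotient having numerically trivial canonical class, again contradicting $K_N$ ample. Only the ball quotient case survives. The numerical identity $K_N\equiv -\tfrac{n+1}{2}\L$ will drop out of the very Atiyah class computation that produces the projective connection, by matching coefficients.

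For the projective connection, the plan is to feed the pullback relation $c_1(K_M)/m \equiv -\tfrac{1}{2}f^*c_1(\L)$ (coming from $-2K_M\equiv mL$ and $L=f^*\L$) into \Prop{AtCl} applied to $M$. The normalized Atiyah class then takes the shape
$$a(\Omega_M^1)\;\equiv\;-\tfrac{1}{2}f^*c_1(\L)\otimes\mathrm{id}\;-\;\mathrm{id}\otimes\tfrac{1}{2}f^*c_1(\L)\;+\;\tfrac{1}{2}\,g\otimes g^{-1}(f^*c_1(\L)).$$
The two exact sequences of \Formel{seqs} induce a block decomposition of the bundle $\Omega_M^1\otimes T_M\otimes\Omega_M^1$; I would project the displayed identity onto the diagonal $f^*\Omega_N^1\otimes f^*T_N\otimes f^*\Omega_N^1$ summand. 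The first two terms descend directly; since $f$ has a section, $f_*f^*=\mathrm{id}$ on both $\Pic(N)$ and the relevant $H^1$-groups, and pushing down gives
$$a(\Omega_N^1)\;\equiv\;-\tfrac{1}{2}c_1(\L)\otimes\mathrm{id}\;+\;\mathrm{id}\otimes\bigl(-\tfrac{1}{2}c_1(\L)\bigr)\qquad\text{in }H^1(N,\Omega_N^1\otimes T_N\otimes\Omega_N^1).$$
This is precisely the normal projective connection form, and reading off $c_1(K_N)/(n+1)=-\tfrac{1}{2}c_1(\L)$ simultaneously yields the required relation $K_N\equiv-\tfrac{n+1}{2}\L$.

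The main obstacle is controlling the third, $g$-twisted term when projecting onto the diagonal block: I must know that $g$ interchanges the sub- and quotient-pieces of the filtrations in \Formel{seqs}, so that $g\otimes g^{-1}(f^*c_1(\L))$ contributes nothing to the $f^*\Omega_N^1$-diagonal. One direction of this swap is exactly \Lem{zero}, which forces the composition $f^*\Omega_N^1\hookrightarrow\Omega_M^1\stackrel{g^{-1}}{\longrightarrow}T_M\otimes L^*\twoheadrightarrow f^*T_N\otimes L^*$ to vanish, hence $g^{-1}(f^*\Omega_N^1)\subset f^*E^{1,0*}\otimes L^*$. The dual off-diagonal vanishing should follow from a parallel slope inequality (as in the proof of \Lem{zero}, using $\mu(\Omega_N^1)>\mu(E^{1,0})$), together with the triviality of the relevant restrictions to the abelian fibres of $f$, on which $g$ is easy to describe. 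Carrying out this bookkeeping rigorously for the two interlocking exact sequences of \Formel{seqs} is where the real work lies; once it is done, the Kobayashi--Ochiai classification closes the argument immediately.
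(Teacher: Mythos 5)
Your proposal follows the paper's route: restrict/project the Atiyah class formula of \Prop{AtCl} to the $\Omega_N^1$--block, recognize the normal projective connection, invoke \Theo{KEPC}, and read off $K_N\equiv-\frac{n+1}{2}\L$ from the coefficient (the paper does this by taking the trace, which is your ``matching coefficients''). Two remarks on the part you flag as ``the real work''. First, the place where the projection is legitimately a projection is the section: restricted to $s(N)$ the sequence $0\to\Omega_N^1\to\Omega_M^1|_N\to E^{1,0}|_N\to 0$ splits holomorphically via $ds$, and since the Atiyah class of a direct sum is the sum of the Atiyah classes, $a(\Omega_N^1)$ is honestly the $\Omega_N^1$--component of $a(\Omega_M^1)|_N$; this is cleaner than pushing forward by $f_*$. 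Second, and more importantly, no ``dual off-diagonal vanishing'' is needed: the third term is the \emph{pure tensor} $g\otimes g^{-1}(\frac{c_1(K_M)}{m})$, so it suffices that one factor dies under the projection. Since $K_M=f^*H$, the class $\frac{c_1(K_M)}{m}$ lies in the image of $H^1(M,f^*\Omega_N^1)$, and the projection of $g^{-1}(\frac{c_1(K_M)}{m})$ to $H^1(N,T_N\otimes\L^*)$ is therefore induced by the composite $\Omega_N^1\to\Omega_M^1|_N\stackrel{g^{-1}}{\to}T_M\otimes L^*|_N\to T_N\otimes\L^*$, which vanishes identically by exactly the one application of \Lem{zero} you already have. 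So the slope inequality you were planning to prove in the ``dual'' direction is not required, and your argument closes as written once this observation is made.
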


\begin{proof}
Again identify $N$ and $s(N)$, where $s$ denotes the section of $M \lra N$. Then
   \begin{equation} \label{split}
     \xymatrix{0 \ar[r] & \Omega_N^1 \ar[r]_{df|_N} & \Omega_M^1|_N \ar@/_1pc/[l]_{ds} \ar[r] & E^{1,0}|_N \ar[r] & 0}
   \end{equation}
splits holomorphically. The Atiyah class of a direct sum is the sum of the Atiyah classes. We may therefore compute $a(\Omega_N^1)$ by restricting $a(\Omega_M^1)$ to $N=s(N)$ and to the direct summand $\Omega_N^1$. 

The Atiyah class of $\Omega_M^1$ is given by \Prop{AtCl}. Projection of the first two summands simply yields
 \begin{equation}\label{prcN}
    \frac{c_1(K_M|_N)}{m} \otimes id + id \otimes \frac{c_1(K_M|_N)}{m}
  \end{equation}
in $H^1(N, \Omega_N^1 \otimes T_N \otimes \Omega_N^1)$. We claim that the third summand $g \otimes g^{-1}(\frac{c_1(K_M)}{m})$ vanishes under projection. The third summand goes to the tensor product of $g$ under 
 $H^0(M, S^2\Omega_M^1 \otimes L) \lra H^0(N, S^2\Omega_N^1 \otimes \L)$ and the image of $g^{-1}(\frac{c_1(K_M)}{m}) \in H^1(M, T_M \otimes L^*)$ under $H^1(M, T_M \otimes L^*) \lra H^1(N, T_N\otimes \L^*)$. We claim that this last factor vanishes.

We have $K_M = f^*H$ for some $H \in Pic(N)$. The image of $g^{-1}(\frac{c_1(K_M)}{m})$ under $H^1(M, T_M \otimes L^*) \lra H^1(N, T_N\otimes \L^*)$ is the image of $c_1(H)$ under $H^1$ of
  \[\xymatrix{\Omega_N^1 \ar[r]^{df|_N} & \Omega_M^1|_N \ar[r]^{(g|_N)^{-1}} & T_M\otimes L^*|_N \ar[r]^{ds^t} & T_N \otimes \L^*.}\]
By \Lem{zero}, this map vanishes identically. Then the third summand indeed vanishes identically. We find that $a(\Omega_N^1)$ is given by \Formel{prcN}. The trace gives
  \[K_N \equiv \frac{n+1}{m}K_M|_N \quad \mbox{or} \quad  \frac{K_N}{n+1} \equiv \frac{K_M|_N}{m}.\]
Replace this in \Formel{prcN} to see that $N$ has a projective connection. Then $K_N$ ample implies $N$ is a ball quotient by \Theo{KEPC}. Finally 
$\frac{K_N}{n+1} \equiv \frac{K_M|_N}{m}$ and \Formel{LLK} gives $K_N \equiv -\frac{n+1}{2}\L$.
\end{proof}

\Lem{zero} implies that $g^{-1}$ induces a commuting diagram
 \begin{equation} \label{seqs3}
\xymatrix{0 \ar[r] & f^*\Omega_N^1 \ar[r]\ar[d]^{\varphi} & \Omega_M^1\ar[d]^{g^{-1}} \ar[r] & f^*E^{1,0} \ar[r] \ar[d]^{\psi}& 0 \\
 0 \ar[r] & f^*E^{1,0*} \otimes L^* \ar[r] & T_M \otimes L^* \ar[r] & f^*T_N \otimes L^* \ar[r] & 0.}
   \end{equation}
Here $\varphi$ is a bundle injection while $\psi$ surjective. By the projection formula we have $\Hom(f^*\Omega_N^1, f^*(E^{1,0*}\otimes \L^*)) \simeq \Hom(\Omega_N^1, E^{1,0*}\otimes \L^*)$ implying that $\varphi$ is the pullback of some bundle injection $\Omega_N^1 \lra E^{1,0*}\otimes \L^*$. Denote the cocernel by $R$. Then
  \begin{equation} \label{seqs2} 
    0 \lra \Omega_N^1 \lra E^{1,0 *}\otimes \L^* \lra R \lra 0.
  \end{equation}

In fact $R = 0$:

\begin{lemma} \label{halb}
  $E^{1,0} \simeq T_N \otimes \L^*$ and $m = 2n$.
\end{lemma}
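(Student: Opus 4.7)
The plan is to prove $R=0$ in \eqref{seqs2}; then $\Omega_N^1\simeq E^{1,0*}\otimes\L^*$, matching ranks forces $m=2n$, and dualising yields $E^{1,0}\simeq T_N\otimes \L^*$. The strategy is to exhibit $R^*\otimes\L^*$ as a subbundle of $E^{1,0}$ whose slope strictly exceeds that of $E^{1,0}$, contradicting the semistability of the Hodge bundle.

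First I would dualise \eqref{seqs2} and twist by $\L^*$ to obtain the short exact sequence
\[
0\lra R^*\otimes\L^*\lra E^{1,0}\lra T_N\otimes \L^*\lra 0,
\]
which realises $R^*\otimes \L^*$ as a vector subbundle of $E^{1,0}$ of rank $m-2n$. Restricting \Formel{LLK} to the section $N$, using $K_{M/N}|_N=\det E^{1,0}$, and combining with the relation $K_N\equiv -\frac{n+1}{2}\L$ from \Lem{ballq}, I obtain $\det E^{1,0}\equiv \frac{n+1-m}{2}\L$. A parallel determinant calculation in \eqref{seqs2} gives $\det R\equiv \frac{2n-m}{2}\L$. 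Substituting $\deg\L=-\frac{2}{n+1}K_N^n$ yields
\[
\mu(E^{1,0})=\frac{m-n-1}{(m-n)(n+1)}K_N^n,\qquad \mu(R^*\otimes\L^*)=\frac{K_N^n}{n+1},
\]
and hence
\[
\mu(R^*\otimes\L^*)-\mu(E^{1,0})=\frac{K_N^n}{(n+1)(m-n)}>0.
\]

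I would then invoke the $K_N$--semistability of $E^{1,0}$, the Hodge bundle of the weight--one polarised VHS attached to the abelian scheme $f:M\lra N$ over the projective manifold $N$; this is standard (it follows from Simpson's polystability of the associated Higgs bundle $V=E^{1,0}\oplus E^{0,1}$, cf.~\cite{VZ2}). Semistability forces $\mu(R^*\otimes\L^*)\le \mu(E^{1,0})$ as soon as $R^*\otimes \L^*$ is a non--zero subsheaf, contradicting the strict inequality above. Hence $R^*\otimes\L^*=0$, i.e.\ $R=0$, and both claims follow.

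The main obstacle is the semistability of the Hodge bundle $E^{1,0}$: everything else is routine slope bookkeeping and a rank comparison. For abelian schemes over projective manifolds this semistability is classical, but a self--contained proof would require unpacking the polystability of the Higgs bundle $V$ together with the polarisation identifying $E^{0,1}$ with $E^{1,0*}$, which is where the genuinely Hodge--theoretic input enters. An equivalent, more geometric variant would instead show that the $S^2 E^{1,0}\otimes\L$--component of $g$ vanishes (forcing $T_{M/N}$ to be $g$--isotropic and hence $m\le 2n$); this relies on $\mu(S^2E^{1,0}\otimes\L)<0$ combined with the same semistability input.
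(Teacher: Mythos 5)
Your overall strategy --- prove $R=0$ by exhibiting a slope-destabilizing subsheaf of the Hodge bundle --- is the same as the paper's, and your slope bookkeeping is correct: granting \Lem{ballq}, one indeed gets $\mu(R^*\otimes\L^*)-\mu(E^{1,0})=\frac{K_N^n}{(n+1)(m-n)}>0$. The gap is the stability input. The Hodge bundle $E^{1,0}$ of an abelian scheme is \emph{not} semistable in general, and this does not follow from Simpson polystability of the Higgs bundle $V=E^{1,0}\oplus E^{0,1}$: that polystability only bounds the slopes of \emph{Higgs} subsheaves, i.e.\ subsheaves preserved by $\theta$, and a subsheaf $\F\subset E^{1,0}$ yields a Higgs subsheaf $(\F,0)\subset (V,\theta)$ only when $\theta^{1,0}(\F)=0$. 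For arbitrary subsheaves of $E^{1,0}$ there is no slope bound; for instance, the fibre product over a curve of a non-isotrivial family of elliptic curves with a constant one has $E^{1,0}=A\oplus\O$ with $\deg A>0$, which is unstable. Note also that in your own computation $\mu(E^{1,0})=\frac{m-n-1}{(m-n)(n+1)}K_N^n$ is strictly positive (unless $m=n+1$) while $\mu(V)=0$, so $V$ is already far from semistable as a plain bundle --- a warning that only the Higgs structure can be invoked. Hence the inequality $\mu(R^*\otimes\L^*)\le\mu(E^{1,0})$ that your argument hinges on is unjustified, and your closing ``more geometric variant'' (isotropy of $T_{M/N}$ via $\mu(S^2E^{1,0}\otimes\L)<0$) founders on the same point.

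This is precisely the difficulty the paper's proof is engineered around: it applies $f_*$ to the compatibility diagram \Formel{seqs3} and uses the snake lemma to identify $R$ with the kernel of a surjection $\K\to\K'\otimes\L^*$, where $\K=\ker\theta^{1,0}\subset E^{1,0}$. Thus $(R,0)$ is an honest Higgs subsheaf of $(V,\theta)$, and only then does Higgs semistability give $\mu(R)\le\mu(V)=0$, contradicting $\mu(R)>0$. To repair your version you would have to show that your subsheaf $R^*\otimes\L^*\subset E^{1,0}$ (the kernel of the dual of $\varphi$) is annihilated by the Kodaira--Spencer map; that is not automatic, and establishing it is in substance what the paper's $f_*$/snake-lemma computation accomplishes.
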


\begin{proof}
  Assume $R \not= 0$ or, equivalently, $\rk R = m-2n > 0$. By \Formel{seqs2} and \Formel{LLK} 
  \[\det R \equiv  -K_{M/N}|_N - (m-n)\L -K_N \equiv \frac{m- 2n}{m}(K_{M/N}|_N + K_N).\]
Then $\mu(R) = \frac{\det R.K_N^{n-1}}{m-2n} =  \frac{1}{m}(K_{M/N}|_N + K_N).K_N^{n-1} =  \frac{1}{m}((m-n)\mu(E^{1,0}) + n\mu(\Omega_N^1))$. As $\mu(E^{1,0}) \ge 0$ and $\mu(\Omega_N^1) - \mu(E^{1,0}) > 0$ (see the proof of \Lem{zero}) we find $\mu(R) > 0$.

Consider $f_*$ of \Formel{seqs3}. Let $\K = ker\{\theta^{1,0}: E^{1,0} \lra E^{0,1}\otimes \Omega_N^1\}$ and  $\K' = ker\{T_N \lra E^{1,0 *} \otimes E^{0,1}\}$. We obtain 
\[\xymatrix{& 0\ar[d] & & &\\
0 \ar[r] & \Omega_N^1 \ar[r]\ar[d] & f_*\Omega_M^1 \ar[d]^{\simeq}\ar[r] & \K \ar[r] & 0\\
0 \ar[r] & E^{1,0*} \otimes \L^* \ar[r]\ar[d] & f_*T_M\otimes \L^* \ar[r] & \K'\otimes \L^* \ar[r] & 0 \\
& R \ar[d] & & & \\
& 0 & & &.}\]
We have an induced surjective map $\K \lra \K'\otimes \L$. By the snake Lemma the kernel is isomorphic to $R$. This gives an inclusion of Higgs bundles
  \[(R, 0) \subset (\K, 0) \subset (E^{1,0} \oplus E^{0,1}, \theta).\]
The latter is semistable as a Higgs bundle. Then $\mu(R) \le \mu(E^{1,0}\oplus E^{0,1}) = 0$, a contradiction.
\end{proof}

\begin{proof} [Proof of \Prop{main}.]
 Let $f:M_m \to N_n$ be as in \Prop{main}. By \Lem{halb}, we have $m = 2n$ and $E^{1,0} \simeq T_N \otimes \L^*$ where $\L \in \Pic(N)$ and $f^*\L = L$. By \Lem{zero} and \Lem{halb}, the conformal structure $g$ is compatible with
\[0 \lra f^*\Omega_N^1 \lra \Omega_M^1 \lra f^*(T_N \otimes \L) \lra 0.\]
By \Lem{ballq}, $N$ is a ball quotient. Then $\Omega_N^1$ is stable with respect to $K_N$ and $\Hom(\Omega_N^1, \Omega_N^1)\simeq \KC$. By \Lem{CSeq}, the splitting class $e \in  H^1(M, f^*(\Omega_N^1 \otimes \Omega_N^1 \otimes \L))$ of the above sequence is alternating on $f^*T_N$. 

The beginning of Leray spectral gives a map
    \[H^1(M, f^*(\Omega_N^1 \otimes E^{1,0*})) \lra 
  H^0(N, R^1f_*f^*(\Omega_N^1 \otimes E^{1,0*}))\simeq 
   H^0(N, \Omega_N^1 \otimes E^{1,0*} \otimes E^{1,0*}).\]
It sends $e$ to the Kodaira Spencer map $\delta$. It is well known, or follows from the construction of the moduli space ${\mathcal A}_{m-n}$ of abelian varieties as the dual of the Lagrangian Grassmanian, respectively, that $\delta$ is symmetric on $E^{1,0}$.

Because of \Lem{halb} we have
  \[\delta \in H^0(N, \Omega_N^1 \otimes E^{1,0*} \otimes E^{1,0*})
\simeq H^0(N, \Omega_N^1 \otimes \Omega_N^1 \otimes \Omega_N^1 \otimes \L^{\otimes 2}).\]
We conclude that $\delta$ as a map $T_n^{3}\lra \L^{\otimes 2}$ is alternating on the first two tensors and symmetric on the latter two. An easy exercise shows $\delta = 0$.

After an \'etale base change we may assume that $f$ is induced by a map $N \lra {\mathcal A}_{m-n}$ which must now be constant. Then $\det E^{1,0}\equiv 0$ and by \Lem{halb}, $K_N \equiv -n\L$. Combined with $K_N \equiv -\frac{n+1}{2}\L$ from \Lem{ballq} we find $(\frac{1}{n}-\frac{2}{n+1})K_N \equiv 0$ and $K_N$ implies 
  $\frac{1}{n}-\frac{2}{n+1}=0$. Then $n=1$.
  
By \Lem{halb}, $M$ is a projective surface. It comes with an elliptic fiber bundle $M \lra N$ with typical fiber $\simeq F$ an elliptic curve. The map has a section and the tangent sequence \Formel{seqs} splits holomorphically because $e$ is alternating and must vanish. Then $M$ is induced by a representation $\pi_1(N) \lra Aut(F)$.
\end{proof}

\end{document}